
\documentclass[11pt]{article} 

\usepackage{latexsym}
\usepackage{graphics}
\usepackage{amsmath}
\usepackage{xspace}
\usepackage{amssymb}
\usepackage{psfrag}
\usepackage{epsfig}
\usepackage{fullpage}
\usepackage{amsmath,amsthm}
\usepackage{epsfig}
\usepackage{pst-all}

\setlength{\oddsidemargin}{-.20in}
\setlength{\evensidemargin}{-.20in} \setlength{\textwidth}{6.8in}
\setlength{\topmargin}{-0.3in} \setlength{\textheight}{8.9in}

\pagenumbering{arabic}
\newcommand {\be}[1]{\begin{equation}\label{#1}}
\newcommand {\bel}[1]{\begin{align*}}
\newcommand {\ee}{\end{equation}}
\newcommand {\eel}[1]{\end{align*}}
\newcommand {\bea}{\begin{eqnarray}}
\newcommand {\eea}{\end{eqnarray}}
\newcommand {\beaa}{\begin{eqnarray*}}
\newcommand {\eeaa}{\end{eqnarray*}}



\newcommand{\pr}{\mathbb{P}}
\newcommand{\E}{\mathbb{E}}

\newcommand{\ignore}[1]{\relax}


\newtheorem{theorem}{Theorem}
\newtheorem*{theorem*}{Theorem}

\newtheorem{lemma}{Lemma}

\newtheorem{prop}{Proposition}
\newtheorem{coro}{Corollary}

\title{Performance Analysis of Queueing Networks via Robust Optimization}
\author{
{\sf Dimitris Bertsimas }
  \thanks{Operations Research Center and Sloan School of Management, MIT, Cambridge, MA,  02139, e-mail: {\tt
dbertsim@mit.edu}}\and
 {\sf David Gamarnik }
  \thanks{Operations Research Center and Sloan School of Management, MIT, Cambridge, MA,  02139, e-mail: {\tt
gamarnik@mit.edu}}\and {\sf Alexander Anatoliy Rikun}
\thanks{Operations Research Center, MIT, Cambridge, MA,  02139,
e-mail: {\tt arikun@mit.edu}}}

\begin{document}
\maketitle

\begin{abstract}
Performance analysis of queueing networks is one of the most challenging areas of queueing theory. Barring very
specialized models such as product-form type queueing networks,
there exist very few results which provide provable non-asymptotic upper and lower bounds on key performance measures.

In this paper we propose a new performance analysis method,
which is based on  the robust optimization.
The basic premise of our approach is as follows: rather than assuming that the stochastic primitives of a queueing
model satisfy certain probability laws, such as, for example, i.i.d. interarrival and service times distributions,
we assume that the underlying primitives are deterministic and satisfy the
\emph{implications}  of such probability laws. These implications take the form of simple linear constraints,
namely, those motivated by the Law of the Iterated Logarithm (LIL). Using this approach we are able to obtain
performance bounds on some key performance measures. Furthermore, these performance bounds imply
similar bounds in the underlying stochastic queueing models.

We demonstrate our approach on two types of queueing networks: a)  Tandem Single Class
queueing network and b)  Multiclass Single Server
queueing network. In both cases, using the proposed robust optimization approach,
 we are able to obtain \emph{explicit} upper bounds on some steady-state performance measures.
For example, for the case of TSC system we obtain a bound of the form $C(1-\rho)^{-1}\ln\ln((1-\rho)^{-1})$
on the expected steady-state sojourn time, where $C$ is an explicit constant and $\rho$ is the bottleneck traffic intensity.
This qualitatively agrees with the correct heavy traffic scaling of this performance
measure up to the $\ln\ln((1-\rho)^{-1})$ correction
factor.
\end{abstract}

\section{Introduction}
Performance analysis of queueing networks is one of the most challenging areas of queueing theory.
The difficulty stems from the presence of network feedback, which introduces a complicated multidimensional
structure into the stochastic processes underlying the key performance measures.
Short of specialized cases, such as product form networks, which typically rely on Poisson arrival/exponential
service time distributional assumptions,
the problem is largely unresolved. Specifically, given the topological description of a queueing network and given
the description of the underlying stochastic primitives such as interarrival and service times distributions, we do not  have good
tools for computing exactly or obtaining upper and lower bounds on
key performance measures, such as, for example average queue lengths and waiting times.
Some of results which provide non-asymptotic bounds on performance measures can be found in~\cite{bpt},\cite{kumar_bounds},\cite{kumarmorrison},
\cite{kumarou},\cite{bgt},\cite{BertsimasNinomoraII},
all of which require Markovian (Poisson arrival/exponential service time) distributional assumptions. Moreover, some of these bounds become quite weak
as traffic intensity (of some of the network components) approach unity. For example, a bound of the form $O((1-\rho^*)^{-2})$
is obtained in \cite{bgt_perf}, where $\rho^*$ is the bottleneck (real or virtual, see the reference) traffic intensity.
The other references can lead to infinite upper bounds even in the cases where stationary distribution exists.
The approaches in these papers also do not extend to the case of non-Markovian systems.
As a consequence,  most of the known performance analysis results are of an asymptotic nature,
which apply to queueing networks in various limiting regimes, such as the 
heavy traffic regime~\cite{harrison},\cite{whittBook},\cite{ChenYaoBook},
large deviations methods~\cite{GaneshOconnelVischik},\cite{large_deviations_ShWeiss}, approximations by phase-type
distributions~\cite{kleinrock},\cite{LatoucheRamaswami}.

In this paper, we partially fill this gap by developing a new performance analysis approach based on  robust optimization methods.
The theory of robust optimizaiton emerged recently as a very successful and constructive approach for the analysis of
certain stochastic modeling
problems~\cite{soyster},\cite{NemirovskiBetTal98},
\cite{NemirovskiBetTal99}, \cite{BS03}, \cite{bertsimassim04}. The main premise of our approach in the queueing context is that, rather than assuming probabilistic laws for the underlying stochastic primitives, such as, for example, i.i.d. interarrival and service times, we consider a deterministic queueing model and we will assume only the \emph{implications} of these laws. Specifically we consider implications of the Law of the Iterated Logarithm (LIL). The objective is to find laws which on the one hand hold in the underlying stochastic queueing model and, on the other hand, lead to linear constraints in the formulation of the robust optimization problem, and LIL accomplishes this. We illustrate our approach using two queueing  models, namely the Tandem Single Class (TSC) queueing system operating under the First-In-First-Out (FIFO) scheduling policy,
and the Multiclass Single Server (MCSS) queueing system operating under an arbitrary work-conserving policy. Motivated by the LIL, we consider constraints of the form $\sum_{1\le i\le k}U_i\le \lambda^{-1}k+\Gamma\sqrt{k\ln\ln k}$, for all $k\ge 1$. Here  $(U_k, k\ge 1)$
is any of the stochastic primitives of the underlying queueing system, such as, for example, the sequence of interarrival times and $\lambda$ stands for the rate of this stochastic primitive. Using these bounds, we
derive \emph{explicit} bounds on some performance measures such
as sojourn time in the TSC system, namely, the time it takes for a job to be processed by all the servers, and the virtual
workload (virtual waiting time) in the MCSS system, namely, the time required to clear the current backlog in the absence of future arrivals. In both models we derive upper bounds on the aforementioned performance measures
for the corresponding deterministic counterpart models and prove that similar bounds also hold for the same performance measures in the underlying stochastic models. In both cases the bounds are of the order $O({1\over 1-\rho}\ln\ln{1\over 1-\rho})$, where $\rho$ is the (bottleneck for the case of TSC model) traffic intensity. This matches the correct $O({1\over 1-\rho})$ order short of $\ln\ln((1-\rho)^{-1})$ error. While the technical derivation of these bounds is involved, the conceptual approach is very simple. An interesting distinction of our approach from other robust optimization type results is that our results are explicit, as opposed to numeric results one typically obtains from the formulating and solving a robust optimization model. These explicit bounds however, come at a price of not caring much for the constants corresponding to the leading coefficient. In order to keep things simple we sometimes use very crude estimates for such constants.

Our approach bears similarity with some earlier works in the queueing literature. Specifically, the pioneering work of Cruz \cite{cruz1},\cite{cruz2} used a similar non-probabilistic approach to performance analysis by deriving bounds based on placing deterministic constraints on the flow of traffic called ``burstiness constraints". The method could be applied to fairly general network topologies and led to more research in the area. In \cite{GallagerParekh93},\cite{GallagerParekh94}, tighter performance bounds were obtained assuming a ``Leaky Bucket" rate admission control from \cite{turner86} and particular service disciplines. In addition, there is some similarity between the philosophy of our approach and the
\emph{adversarial queueing network approach}~\cite{bkrsw},\cite{aafkll},\cite{gamarnik_stoc99},\cite{gamarnik},\cite{goel},
which emerged in the last decade in the computer science literature and also replaces the stochastic assumptions with adversarial deterministic ones. The deterministic constraints used in the aforementioned works are of the form of $A(t)\le \lambda t+B$ where $A(t)$ is the number of external arrivals into the queueing system up to time $t$ and $\lambda$ represents the arrival rate. As it turns out, these types of assumptions are too restrictive from the probabilistic point of view and do not lead to bounds on the underlying stochastic network: observe that every renewal process $A(t)$ arising from an i.i.d. sequence with positive variance violates this assumption almost surely for every $B$ for large enough $t$. As we demonstrate in this paper, the constraints motivated by the LIL, namely $A(t)\le \lambda t+B\sqrt{t\ln\ln t}$, can indeed be served to obtain
performance bounds, which can be translated into the underlying stochastic network. In fact, the key contribution of our approach is that the deterministic constraints we place on the service and arrival processes are rich enough to lead to stochastic results.
The results based on ``Leaky Buckets", bounded burstiness and  adversarial queueing theory address very general queueing networks.
It would be an interesting research
project to extend our results based on robust optimization  to these general network structures.

The rest of the paper is structured as follows. In the following section we describe two queueing models under the
consideration, namely the tandem single class queueing network
and the single server multiclass queueing network, as well as their
robust optimization counterpart models. Our main results, namely the performance bounds in
robust optimization type queueing systems and their implications for stochastic queueing systems
are stated in Section~\ref{section:MainResults}. The proofs of our main results are in Sections \ref{section:tandemAnalysis} and \ref{section:MCSS_Analysis}. Some concluding thoughts and directions for further research are outlined in Section~\ref{section:Conclusion}. Several technical results  necessary
for proofs of main theorems are delayed untill the Appendix section.

We close this section with some notational conventions. $\ln$ stands for the logarithm with natural base.
The notation $(x)^{1\over 2}$ for a non-negative vector $x\in \mathbb{R}^d$ means applying the square root
operator coordinate-wise: $(x)^{1\over 2}=(x_i^{1\over 2}, 1\le i\le d)$. $A^T$ denotes a transposition operator
applied to the matrix $A$.

\section{Model description}\label{section:Models}
We now describe the two queueing models analyzed in this paper, both very well studied models in the literature.
We begin by describing these models in the stochastic setting, and then we describe their deterministic robust optimization counterparts.

\subsection{A tandem single class (TSC) queueing network. Stochastic model}
The model is a tandem of single servers $S_1,\ldots,S_J$
processing a single stream of jobs arriving from outside and
requiring services at $S_1,\ldots,S_J$ in this order. The jobs arrive from outside according to an i.i.d. renewal process. Let
$U_1,U_2,U_3,\ldots$ denote i.i.d. interarrival
times with a common distribution function $F_a(t)=\pr(U_1\le t)$,
where $U_1$ is the time at which the first job arrives.
The external arrival rate is defined to be $\lambda \triangleq 1/\E[U_1]$ and
the variance of $U_1$ is denoted by $\sigma_a^2$.

The jobs arriving externally join the buffer corresponding to server
$S_1$ where they are served using First-In-First-Out (FIFO)
scheduling policy. We assume that all buffers are of infinite capacity. After service completion, jobs are routed to the buffer of server $S_2$, where they are also served using FIFO scheduling policy, then they are routed to servers
$S_3,S_4$, etc. After service completion in server
$S_J$ the jobs depart from the network. Let $V^j_k$ denote the
service time requirement for job $k$ in server $j$. We assume that
the sequence $(V^j_k, k\ge 1)$ is i.i.d. for each $j$, and is independent from
all other random variables in the network. The distribution of the
service time in server $j$ is $F_{s,j}(t)=\pr(V^j_1\le t), t\ge 0$.
The service rate in server $S_j$ is defined to be
$\mu_j \triangleq 1/\E[V^j_1]$, and we denote by $\mu_{\min} = \min_{1\le j\le J}\mu_j$ the rate
of the slowest server. $\sigma_{s,j}^2$ denotes the variance of $V^j_1$ for each $j=1,\ldots,J$.
The traffic intensity in server $S_j$ is
defined to be $\rho_j=\lambda/\mu_j$, and the bottleneck traffic
intensity is defined to be $\rho^*=\max_j\rho_j=\lambda/\mu_{\min}$.

Denote by $W^j_k$ the waiting time experienced by job $k$  in server
$j$ not including the service time $V^j_k$. Let $W_k=\sum_j(W^j_k+V^j_k)$ be the sojourn time of
the job $k$. Namely, this is time between the arrival of job $k$ into buffer $1$ and service completion
of the same job in buffer $J$.
Denote by $Q_j(t)$ the queue length in server $j$ (the
number of jobs in buffer $j$) at time $t$. We assume that
initially all queues are empty: $Q_j(0)=0, 1\le j\le J$, although most of our results
can either be easily adopted to the case of non-zero queues at time zero, or apply
to the steady-state measures where the initializations of the queues is irrelevant.
Let $I^j_k$
denote the idle time of server $j$ in between servicing jobs $k-1$
and $k$ for $k=2,...,N$. We define $I^j_1=0~~\forall j=1,\ldots,J$.

The model just described will be denoted by TSC(St) (Tandem Single
Class Stochastic) for short.
It is known~\cite{SigmanGJN},\cite{dai},\cite{DaiMeyn96},\cite{ChenYaoBook} that as long as $\rho^*<1$,
and some additional mild conditions hold, such as finiteness of moments,
 TSC(St) is stable and the stochastic processes underlying the performance measures such as queue lengths,
 workloads, sojourn times are mixing. Namely, these processes are
positive Harris recurrent~\cite{dai},\cite{metwee}, and the transient performance
measures converge to the (unique) steady-state
performance measures both in distributions and in moments. Computing these performance measures is a different matter,
however. We
denote by $W^j_\infty, W_\infty$ the steady state versions
of the random variables $W^j_k,W_k$. Thus provided that $\rho^*<1$ and some additional
technical assumptions hold, we have
 \begin{align}\label{eq:Winfty}
\lim_{n\rightarrow\infty}\E[W_n]=\E[W_\infty].
\end{align}
We will assume that $\rho^*<1$ holds without explicitly stating it.
Rather than describing  the assumptions required to make (\ref{eq:Winfty}) true, we will simply assume when
stating our results that  (\ref{eq:Winfty}) holds as well.

\subsection{A multiclass single server (MCSS) queueing system. Stochastic model}
We now describe our second queueing model.
Consider a single server  queueing system which
processes $J$ classes of jobs. The jobs of class $j=1,2,\ldots,J$
arrive from outside according to a renewal process with i.i.d.
interarrival times $U^j_k, k\ge 1$ and distribution function
$F_{a,j}(t)=\pr(U^j_1\le t)$. The arrival rate for class $j$ jobs is
$\lambda_j\triangleq 1/\E[U^j_1]$. It is possible that some classes $j$ do not have an external
arrival process, in which case $U^j_k=\infty$ almost surely and $\lambda_j=0$.
Let $\sigma_{a,j}^2$ be the variance of $U^j_1$.
The sequences $(U^j_k, k\ge 1)$ are also
assumed to be independent for different $j$. Let
$\lambda=(\lambda_j)$  denote the J-vector of arrival rates. We let $\lambda_{\max}=\max_{1\leq j\leq J}{\lambda_j}$
and $\lambda_{\min}=\min_{1\leq j\leq J}{\lambda_j}$. We let $A(t)=(A_j(t))$
denote the vector of cumulative number of external
arrivals up to time $t$ where $A_j(t)=\max\{k: \sum_{1\le i\le k}U^j_i\le t\}$.

The jobs corresponding to class $j$ are stored in buffer $B_j$ until served. As in the single class case, we assume all buffers are of infinite capacity. The service time for the $k$-th job arriving to buffer $B_j$ is denoted by $V^j_k$ and the sequence $(V^j_k, k\ge 1)$ is assumed to be i.i.d.
with a common distribution function $F_{s,j}(t)=\pr(V^j_1\le t)$. Additionally, these sequences are assumed to be independent for
all $j$ and independent from the interarrival times sequences  $(U^j_k, k\ge 1)$.
The average service time for class $j$ is $m_j\triangleq E[V^j_1]$ and the service rate is $\mu_j\triangleq 1/\E[V^j_1]$.
$\sigma_{s,j}^2$ denotes the variance of $V^j_1$.
Let $\bar{m} = (m_j)$ denote
the $J$-vector of average service times and let  $\mu=(\mu_j)$ be the $J$-vector of service rates.
Let $M$ denote the diagonal matrix with $j$-th entry equal to $\mu_j$
and let  $\mu_{\max}=\max_{1\leq j\leq J}{\mu_j}$.

We assume that the jobs in buffer $B_j$ are served using FIFO rule, but prioritizing jobs between different buffers $B_j$ is done
using some scheduling policy $\theta$. The only assumption we make about $\theta$ is that it is a
\emph{work-conserving} policy. Namely, the server is working full time as long as there is at least one job in one
of the buffers $B_j, ~1\le j\le J$. The only performance measure we will consider is the workload (defined below)
for which it is well known that
the details of the scheduling policy are unimportant for us, as long as the policy is work-conserving.

The routing of jobs after service completions is determined using a routing
matrix $P$, which is an $J$ by $J$ $0,1$  matrix $P=(P_{i,j}, 1\le i,j\le J)$. It is assumed that
$\sum_j P_{i,j}\le 1$ for each $i$. (Namely, the sum is either $1$ or $0$). Upon service completion in buffer $B_i$,
the job of class $i$ is routed to buffer $j$ if $P_{i,j}=1$. Otherwise, if $\sum_j P_{i,j}=0$, the jobs of class
$i$ leave the network. It is assumed that $P^n=0$ for some positive integer $n$. It is easy to see that this condition
is equivalent to saying that all jobs eventually leave the network.

It is known~\cite{ChenYaoBook} that the traffic equation
$\bar\lambda_i=\lambda_i+\sum_{1\le j\le J} \bar\lambda_jP_{j,i}$ has a unique
solution $\bar\lambda=(\bar\lambda_j)$ given simply as
$\bar\lambda=[I-P^T]^{-1}\lambda$, where $I$ is the $J$ by $J$
identity matrix. Let $\bar\lambda_{\max}=\max_j(\bar\lambda_j)$
(observe that $\lambda_j\le \bar\lambda_j$ for every $j$ and hence $\bar\lambda_{\max}\geq \lambda_{\max}$). Let $\bar A(t)=(\bar A_j(t))$ denote the vector of number of arrivals by time $t$ that will eventually route to server j: $\bar A_j(t) = e_j^T(I+(P^T)^1+(P^T)^2+\ldots)A(t)=e_j^T[I-P^T]^{-1}A(t)$ and $e_j$ denotes the $j-th$ unit
vector.

The traffic intensity vector is defined to be
$\bar\rho=M^{-1}\bar\lambda=M^{-1}[I-P^T]^{-1}\lambda$. The traffic
intensity of the entire server is $\rho=e^T\bar\rho$, where $e$ is
the $J$ vector of ones. Let $Q_j(t)$ denote the queue length in
buffer $j$ at time $t$,  let $Q(t)=(Q_j(t))$. We assume that $Q(0)=1$.
As for the case of TSC model, our results can be extended to the case $Q(0)\geq 0$, but
for the results regarding steady-state behavior, the initialization of queues is irrelevant.
Denote by $W^j_k$ the
waiting time of the $k$-th job arriving into buffer $j$.  We let $W_t$ denote the workload at time $t$. Namely, $W_t$ is the time required to process all the jobs
present in the system at time $t$, in the absence of  the future arrivals.
Note that $W_t$ is also the virtual waiting time at time $t$ when the scheduling policy is FIFO.
Observe that if $t_0$ marks the beginning of a busy period and $t_1$ belongs to the same
busy period (namely, the server was working continuously during the time interval $[t_0,t_1]$), then almost surely
\begin{align}\label{eq:Wt}
W_{t_1}&=\sum_{i=\bar A_1(t_0)}^{\bar A_1(t_1)}V_i^1+\ldots+\sum_{i=\bar A_J(t_0)}^{\bar A_J(t_1)}V_i^J-(t_1-t_0).
\end{align}
The
model described above is denoted by MCSS(St) (Multiclass Single
Server Stochastic) for short. It is known~\cite{dai} that if $\rho<1$,
and some additional technical assumption on interarrival and
service time distributions hold
then  MCSS(St) is stable and enters the steady state in the
same sense as described for the tandem queueing network. While
in this case the steady-state distribution of many performance measures usually depends on the details of
work-conserving policy used, the steady-state distribution of the workload does
not depend on the policy, as we have discussed above. Let $W_\infty$ denote the workload in steady state,
and let $B_\infty$ and $I_\infty$ denote the steady-state duration of the busy and idle periods, respectively.
Additionally, denote by $I_0,B_1,I_1,B_2,I_2,\ldots$ the alternating sequence of the lengths of the busy and idle periods of the
MCSS(St) system, assuming that time zero initiates a busy period.
Under the same technical assumptions as above the following ergodic properties hold almost surely:
\begin{align}
\lim_{t\rightarrow\infty}{\int_0^t W_sds \over t}&=\E[W_\infty], \label{eq:Wtergodic} \\
\lim_{n\rightarrow\infty}{\sum_{1\le i\le n} B_i\over n}&=\E[B_\infty], \label{eq:Bergodic} \\
\lim_{n\rightarrow\infty}{\sum_{1\le i\le n} I_i\over n}&=\E[I_\infty], \label{eq:Iergodic} \\
\lim_{n\rightarrow\infty}{\sum_{1\le i\le n} B^2_i\over n}&=\E[B^2_\infty]. \label{eq:B2ergodic}
\end{align}
We denote by $n(t)$ the number of busy periods that have been initiated up to time $t$. Mathematically, we define $n(t)$ to satisfy $\sum_{1\le i\le n(t)-1}(B_i+I_i)<t\le \sum_{1\le i\le n(t)}(B_i+I_i)$.
When $t\in [\sum_{1\le i\le n(t)-1}(B_i+I_i),\sum_{1\le i\le n(t)-1}(B_i+I_i)+B_{n(t)}]$, $t$ falls on a busy period and using the definition of $n(t)$, we have $W(t)\le B_{n(t)}$. When $t\in [\sum_{1\le i\le n(t)-1}(B_i+I_i)+B_{n(t)},\sum_{1\le i\le n(t)}(B_i+I_i)]$, $t$ falls on idle period $I_{n(t)}$ and hence $W(t)=0$. We let $\tau_i$ denote the beginning of the $i$-th busy period. This implies
\begin{align*}
{\int_0^tW(s)ds\over t} = {\sum_{i=1}^{n(t)}\int_{\tau_i}^{\min(\tau_i+B_i,t)}W(s)ds\over t}
\le {\sum_{1\le i\le n(t)}B_i^2\over \sum_{1\le i\le n(t)-1}(B_i+I_i)}
\end{align*}
If (\ref{eq:Wtergodic}),(\ref{eq:Bergodic}),(\ref{eq:Iergodic}) and (\ref{eq:B2ergodic}) hold, then we also obtain
\begin{align}\label{eq:boundWinfty}
\E[W_\infty]\le {\E[B^2_\infty]\over \E[B_\infty]+\E[I_\infty]}\le {\E[B^2_\infty]\over \E[B_\infty]}.
\end{align}
This bound will turn useful when we apply our results for robust optimization models to the underlying stochastic model.
As for the TSC case, we assume from now on $\rho<1$.
Rather than listing the assumptions leading to  ergodic properties
(\ref{eq:Wtergodic}),(\ref{eq:Bergodic}),(\ref{eq:Iergodic}) and (\ref{eq:B2ergodic}) we assume
when stating our results,
that the stochastic process $W_t$ enters the steady-state
as $t\rightarrow\infty$ and that the properties
(\ref{eq:Wtergodic}),(\ref{eq:Bergodic}),(\ref{eq:Iergodic}) and (\ref{eq:B2ergodic}) holds almost surely.

\subsection{Robust optimization type queueing systems}
We now describe deterministic robust optimization type counterparts of the two
stochastic queueing models described in the previous subsections.

We begin with TSC model and describe the corresponding model which we denote by
TSC(RO) (Tandem Single Class Robust Optimization). The description of the
network topology is the same as for TSC(St). However, it is
not assumed that $U_k,V^j_k$ and, as a result $Q(t),W^j_k,W_k$
are random variables. Rather we assume that these quantities are
\emph{arbitrary} subject to certain linear constraints detailed
below. Additionally, we assume that the system starts empty $Q(0)=0$ and only $n$ jobs
go through the system.

Specifically, consider a sequence  of non-negative deterministic interarrival
and service times $(U_k,1\le k\le n),(V^j_k,1\le k\le n),1\le j\le J$.
Let
\begin{align}\label{eq:phi}
\phi(x)=\left\{
       \begin{array}{ll}
         \sqrt{x\ln\ln x}, & \hbox{$x\ge e^e$;} \\
         1, & \hbox{$x<e^e$.}
       \end{array}
     \right.
\end{align}
We assume that there exist
$\lambda,\Gamma_a$ and $\mu_j,\Gamma_{s,j}\ge 0, 1\le j\le J$ such that
\begin{align}\label{eq:ukLILarrivaltandem}
\big|\sum_{k+1\le i\le n}U_k-\lambda^{-1}(n-k)\big|
&\le
    \Gamma_a\phi(n-k),\qquad k=0,1,\ldots,n-1,\\
\big|\sum_{k+1\le i\le n}V^j_i-\mu_j^{-1}(n-k)\big|
&\le \Gamma_{s,j}\phi(n-k),\qquad k=0,1,\ldots,n-1,~j=1,2,\ldots,J.
\label{eq:ukLILservicetandem}
\end{align}
It is because we need to consider tail summation $\sum_{k+1\le i\le n}$ we assume that only $n$ jobs going through the system,
though we will be able to apply our results in the stochastic setting where infinite number of jobs pass through the system.
Let $\Gamma=\max(\Gamma_a,\Gamma_{s,j})$. Borrowing from the robust optimization literature terminology (\cite{bertsimassim04}), the parameters $\Gamma_a,\Gamma_{s,j},\Gamma$
are called \emph{budgets
of uncertainty}. Note, that the values $U_k, V^j_k, k\ge 1$ uniquely define the corresponding
performance measures
$Q_j(t),W^j_k,W_k, k=1,\ldots,n$. There is no notion of steady state quantities $Q_j(\infty),W_\infty$
for the model TSC(RO).
The motivation for constraints (\ref{eq:ukLILarrivaltandem}) and (\ref{eq:ukLILservicetandem})
comes from the Law of the Iterated Logarithm, and we discuss the connection in
a separate subsection.

We denote the robust optimization counterpart of the MCSS(St) model by MCSS(RO). In this case it turns out to be convenient to consider infinite sequence of jobs. Thus consider infinite sequences of deterministic non-negative values
$(U^j_k,k\ge 1),(V^j_k,k\ge 1),1\le j\le J$. It is assumed that
values $\lambda_j,\mu_j,\Gamma_{a,j},\Gamma_{s,j}\ge 0,~1\le j\le J$ exist such
that
\begin{align}
\big|\sum_{1\le i\le k}U^j_k-\lambda_j^{-1}k\big|&\le \Gamma_{a,j}\phi(k),~~k=1,2,\ldots,
~j=1,2,\ldots,J, \label{eq:ukLILarrivalsserver}\\
\big|\sum_{1\le i\le k}V^j_i-\mu_j^{-1}k\big|&\le \Gamma_{s,j}\phi(k),~~k=1,2,\ldots,
~j=1,2,\ldots,J. \label{eq:ukLILservicesserver}
\end{align}
For convenience we assume that at time zero the system begins with exactly one job in every class $j=1,\ldots,J$:
$Q_j(0)=1$. Then the first after time zero external arrival into buffer $j$ occurs at time $U^j_1$. As before, we let $\Gamma=\max(\Gamma_{a,j},\Gamma_{s,j})$.

For technical reasons, we also assume that $\Gamma$ in TSC(RO), MCSS(RO) constraints satisfies
\begin{align}
\lambda\Gamma\ge e^{2e} \text{~~~~and~~~~} \min_{j}\lambda_j\Gamma\ge e^{2e}, \text{~~~~respectively.} \label{eq:GammaLarge}
\end{align}

\subsection{The Law of the Iterated Logarithm}
One of the cornerstones of the probability theory is the Law of the Iterated Logarithm (LIL) \cite{chungBook}, which states that
given a i.i.d. sequence of random variables $X_1,\ldots,X_n,\ldots$ with zero mean and finite variance
$\sigma$, the following holds almost surely,
\begin{align*}
\limsup_{n\rightarrow\infty}{\sum_{1\le k\le n}X_k\over \sigma\sqrt{2n\ln\ln n}}=1,~~
\liminf_{n\rightarrow\infty}{\sum_{1\le k\le n}X_k\over \sigma\sqrt{2n\ln\ln n}}=-1.
\end{align*}
The LIL extends immediately to non-zero mean i.i.d. sequences by subtracting $n\E[X_1]$ from $\sum_{1\le k\le n}X_k$.
Furthermore, LIL implies (in the case of zero-mean variables) that
\begin{align}\label{eq:LILGamma}
\Gamma_{\text{LIL}}\triangleq
\sup_{n\ge 1}{|\sum_{1\le k\le n}X_k|\over \sigma\sqrt{2}\phi(n)}<\infty,
\end{align}
where $\phi$ is defined in (\ref{eq:phi}).
Note that $\Gamma_{\text{LIL}}$ is a random variable.
Thus when we consider stochastic queueing models such as  TSC(St) or MCSS(St),
the constraints
(\ref{eq:ukLILarrivaltandem}),(\ref{eq:ukLILservicetandem}),(\ref{eq:ukLILarrivalsserver}),(\ref{eq:ukLILservicesserver})
hold with probability one,
with $\Gamma=\sqrt{2}\Gamma_{\text{LIL}}\sigma$, where $\Gamma_{\text{LIL}}$
is defined in (\ref{eq:LILGamma}) for the corresponding random sequence. Specifically,
let  $\Gamma_{a}=\Gamma_{a,\text{LIL}}=\Gamma_{\text{LIL}}$ and $\sigma=\sigma_a$, when
$X_k=U_{n-k}-\lambda^{-1}, 0\le k\le n-1$ and $U_k$ is the sequence of interarrival times in the TSC(St) model.
Similarly define $\Gamma_{s,j}=\Gamma_{s,j,\text{LIL}}$ when $X_k=V^j_{n-k}-\mu_j^{-1}, 0\le k\le n-1, 1\le j\le J$.
Observe, that for $\Gamma_a,\Gamma_{s,j}$ thus defined,
the constraints (\ref{eq:ukLILarrivaltandem}),(\ref{eq:ukLILservicetandem}) hold
for an infinite sequences of jobs (that is jobs which would have indices $-1,-2,\ldots$),
even though we need it only for the first $n$ jobs. For the MCSS(St) model define
$\Gamma_{a,j}=\Gamma_{a,j,\text{LIL}},\Gamma_{s,j}=\Gamma_{s,j,\text{LIL}}$ corresponding to the sequences
$U^j_k-\lambda_j^{-1}, V^j_k-\lambda_j^{-1}, k\ge 1$, respectively.
We obtain
\begin{prop}\label{prop:LIL}
Constraints
(\ref{eq:ukLILarrivaltandem}),(\ref{eq:ukLILservicetandem}),(\ref{eq:ukLILarrivalsserver}),(\ref{eq:ukLILservicesserver})
hold with probability one for
$\Gamma_a=\sqrt{2}\Gamma_{a,\text{LIL}}\sigma_a,$ $\Gamma_{s,j}=\sqrt{2}\Gamma_{s,j,\text{LIL}}\sigma_{s,j},$
$\Gamma_{a,j}=\sqrt{2}\Gamma_{a,j,\text{LIL}}\sigma_{a,j},$ and $\Gamma_{s,j}=\sqrt{2}\Gamma_{s,j,\text{LIL}}\sigma_{s,j}$,
respectively, where $\Gamma_{\cdot,\cdot,\text{LIL}}$ is defined in (\ref{eq:LILGamma}) for the corresponding sequence.
\end{prop}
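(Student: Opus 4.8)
The plan is to recognize each of the four families of constraints as, \emph{verbatim}, the defining inequality of the corresponding budget $\Gamma_{\cdot,\cdot,\text{LIL}}$, so that the whole statement collapses to the Law of the Iterated Logarithm applied to a handful of i.i.d.\ sequences. The only structural wrinkle is that the TSC constraints (\ref{eq:ukLILarrivaltandem})--(\ref{eq:ukLILservicetandem}) are written for the \emph{tail} sums $\sum_{k+1\le i\le n}(\cdot)$ rather than for ordinary partial sums as in (\ref{eq:LILGamma}).

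First I would isolate the one consequence of the LIL that gets used four times. If $Y_1,Y_2,\ldots$ are i.i.d.\ with $\E[Y_1]=\nu$ and finite variance $\sigma^2$, then $X_k=Y_k-\nu$ is a zero-mean i.i.d.\ sequence, and (excluding the trivial degenerate case $\sigma=0$, in which all constraints hold with budget $0$) the LIL gives $\limsup_m |\sum_{k\le m}X_k|/\big(\sigma\sqrt{2m\ln\ln m}\big)=1$; since $\sqrt{2m\ln\ln m}=\sqrt2\,\phi(m)$ for $m\ge e^e$, $\phi(m)\ge1$ for all $m$ by (\ref{eq:phi}), and every partial sum is a.s.\ finite, the supremum $\Gamma_{\text{LIL}}=\sup_{m\ge1}|\sum_{k\le m}X_k|/(\sigma\sqrt2\,\phi(m))$ is finite a.s.\ --- this is exactly (\ref{eq:LILGamma}) --- and rearranging, with probability one
\begin{align*}
\Big|\sum_{1\le k\le m}Y_k-\nu m\Big|\le \sqrt2\,\Gamma_{\text{LIL}}\,\sigma\,\phi(m),\qquad m\ge1.
\end{align*}

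I would then apply this to each stochastic primitive. For MCSS(RO): fix a class $j$ with $\lambda_j>0$ and take $Y_k=U^j_k$, $\nu=\lambda_j^{-1}$, $\sigma^2=\sigma_{a,j}^2$; the displayed bound is precisely (\ref{eq:ukLILarrivalsserver}) with $\Gamma_{a,j}=\sqrt2\,\Gamma_{a,j,\text{LIL}}\,\sigma_{a,j}$ (a class with no external arrivals imposes no constraint), and taking $Y_k=V^j_k$, $\nu=\mu_j^{-1}$, $\sigma^2=\sigma_{s,j}^2$ gives (\ref{eq:ukLILservicesserver}); independence across classes plays no role. For TSC(RO) I would first reverse the job order: with $X_k=U_{n-k}-\lambda^{-1}$ for $k\ge0$ --- extended, if one likes, to an honest one-sided i.i.d.\ stream by adjoining phantom jobs indexed $0,-1,-2,\ldots$ carrying i.i.d.\ copies of $U_1$, which is the content of the remark before the statement --- the sequence $(X_k)$ is i.i.d.\ with mean $0$ and variance $\sigma_a^2$, because reindexing i.i.d.\ variables by any sequence of distinct indices leaves the law unchanged. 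Since $\sum_{0\le k\le m-1}X_k=\sum_{n-m+1\le i\le n}U_i-\lambda^{-1}m$, which for $m=n-k$ is exactly the bracketed quantity in (\ref{eq:ukLILarrivaltandem}), the displayed bound yields (\ref{eq:ukLILarrivaltandem}) with $\Gamma_a=\sqrt2\,\Gamma_{a,\text{LIL}}\,\sigma_a$; the identical computation with $V^j_i$, $\mu_j^{-1}$, $\sigma_{s,j}^2$ gives (\ref{eq:ukLILservicetandem}).

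Each of these is an almost-sure statement about a single i.i.d.\ sequence, and there are only finitely many families (at most countably many sequences, if one insists on all $n$ simultaneously for TSC), so their intersection still has probability one, which is the proposition. If one additionally wants the normalization (\ref{eq:GammaLarge}), it suffices to enlarge the overall budget $\Gamma$, since constraints (\ref{eq:ukLILarrivaltandem})--(\ref{eq:ukLILservicesserver}) only get easier as the budgets grow. I do not foresee a genuine obstacle here: the only steps deserving a word of care are the reversal step for TSC --- matching the indexing by distance from the last job~$n$ to partial sums of the reversed i.i.d.\ sequence, and invoking invariance of the i.i.d.\ law under such reindexing --- and the degenerate cases $\sigma=0$ and $\lambda_j=0$.
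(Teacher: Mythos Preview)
Your proposal is correct and follows essentially the same approach as the paper: the paper does not give a separate proof but derives the proposition directly from the preceding paragraph, observing that (\ref{eq:LILGamma}) is a.s.\ finite by the LIL and that for TSC one applies it to the reversed sequence $X_k=U_{n-k}-\lambda^{-1}$ (extended past index $1$ if needed), exactly as you do. Your write-up is simply a more explicit and careful version of that same argument, including the edge cases $\sigma=0$ and $\lambda_j=0$ that the paper leaves implicit.
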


As a conclusion, for \emph{every} property derivable  on the basis of these constraints
in our deterministic robust optimization  queueing network models, such as,
for example,  bounds on the sojourn time of the $n$-th job in TSC, the \emph{same property} applies
with probability one for the underlying stochastic network. This observation underlies the main idea of the paper.

\section{Main results}\label{section:MainResults}
In this section we state our main results on the performance bounds for robust optimization type queueing networks
TSC(RO) and MCSS(RO), and the implications of our results for their stochastic counterparts  TSC(St) and MCSS(St).
We begin with TSC(RO) with the goal of obtaining a bound on the sojourn time.

\begin{theorem}\label{theorem:MainResultTSCWaitingLog}
The  sojourn time of the $n$-th job  in the TSC(RO) queueing
system with constraints
(\ref{eq:ukLILarrivaltandem}),(\ref{eq:ukLILservicetandem})
satisfies
\bea W_n \le
\frac{7J^2\Gamma^2\lambda}{1-\rho^*}\ln\ln{\frac{J\lambda\Gamma}{1-\rho^*}}+J\lambda^{-1}.
\label{Thmlnln}
\eea
\end{theorem}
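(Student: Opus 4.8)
The plan is to reduce the deterministic quantity $W_n$ to a one‑variable maximization and then estimate that maximization directly. \emph{Step 1: a max‑plus representation.} Write $T_k=\sum_{i=1}^{k}U_i$ for the arrival epoch of job $k$ and $D_k^j$ for the epoch at which job $k$ departs server $S_j$, so that $W_n=D_n^J-T_n$. Since the network starts empty and each buffer is served FIFO, job $k$ can begin service at $S_j$ only after it has left $S_{j-1}$ and after $S_j$ has finished job $k-1$, whence $D_k^j=\max\bigl(D_k^{j-1},\,D_{k-1}^j\bigr)+V_k^j$ with $D_k^0:=T_k$ and $D_0^j:=0$. Unrolling this recursion by induction on $k$ and $j$ (the standard last‑passage identity, cf.\ \cite{ChenYaoBook}) yields, writing $a_{J+1}:=n$,
\begin{align*}
W_n=\max_{1\le a_1\le a_2\le\cdots\le a_J\le n}\Bigl(-\sum_{i=a_1+1}^{n}U_i+\sum_{\ell=1}^{J}\sum_{k=a_\ell}^{a_{\ell+1}}V_k^\ell\Bigr).
\end{align*}

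\emph{Step 2: apply the LIL constraints.} Put $r_\ell:=n-a_\ell$, so that $r_1\ge r_2\ge\cdots\ge r_J\ge r_{J+1}=0$ and $\sum_{\ell=1}^{J}(r_\ell-r_{\ell+1})=r_1$. The arrival term is exactly a tail sum, so (\ref{eq:ukLILarrivaltandem}) gives $-\sum_{i=a_1+1}^{n}U_i\le-\lambda^{-1}r_1+\Gamma_a\phi(r_1)$. Each service block is a \emph{difference} of two tail sums, $\sum_{k=a_\ell}^{a_{\ell+1}}V_k^\ell=\sum_{k=a_\ell}^{n}V_k^\ell-\sum_{k=a_{\ell+1}+1}^{n}V_k^\ell$, and applying the upper bound in (\ref{eq:ukLILservicetandem}) to the first and the lower bound to the second yields $\sum_{k=a_\ell}^{a_{\ell+1}}V_k^\ell\le\mu_\ell^{-1}(r_\ell-r_{\ell+1}+1)+\Gamma_{s,\ell}\bigl(\phi(r_\ell+1)+\phi(r_{\ell+1})\bigr)$. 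This is the key point that keeps the horizon $n$ out of the final bound: although the constraints control only tail sums ending at $n$, the surviving $\phi$‑arguments $r_\ell,r_{\ell+1}$ are all at most $r_1$, precisely the quantity carrying the negative drift $-\lambda^{-1}r_1$. Now use $\mu_\ell^{-1}=\rho_\ell\lambda^{-1}\le\rho^*\lambda^{-1}$, the monotonicity of $\phi$, and $\Gamma_a,\Gamma_{s,\ell}\le\Gamma$; summing over $\ell$ collapses everything to
\begin{align*}
W_n &\le\max_{r_1\ge0}\bigl(-(1-\rho^*)\lambda^{-1}r_1+(2J+1)\Gamma\,\phi(r_1+1)+\rho^*J\lambda^{-1}\bigr)\\
&\le\max_{r\ge0}\bigl(3J\Gamma\,\phi(r)-(1-\rho^*)\lambda^{-1}r\bigr)+2J\lambda^{-1},
\end{align*}
the last step substituting $r=r_1+1$ and collecting the leftover additive terms, which total at most $2J\lambda^{-1}$.

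\emph{Step 3: the scalar optimization.} It remains to bound $g(a,b):=\max_{x\ge0}\bigl(a\phi(x)-bx\bigr)$ for $a=3J\Gamma$ and $b=(1-\rho^*)\lambda^{-1}$, which I would record as an elementary calculus lemma: there exist absolute constants $\tau,\kappa$ such that $a/b\ge\tau$ implies $g(a,b)\le\kappa\,(a^2/b)\ln\ln(a/b)$. Indeed, on $0\le x<e^{e}$ one has $a\phi(x)-bx\le a$, while on $x\ge e^{e}$ the smooth function $a\sqrt{x\ln\ln x}-bx$ is maximized at a point $x^\ast$ of order $(a/b)^2\ln\ln(a/b)$ with maximum of order $(a^2/b)\ln\ln x^\ast$, and since the extra logarithmic factor appearing inside the iterated logarithm is negligible, $\ln\ln x^\ast\le2\ln\ln(a/b)$ once $a/b\ge\tau$. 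Hypothesis (\ref{eq:GammaLarge}) guarantees $a/b\ge J\lambda\Gamma/(1-\rho^*)\ge e^{2e}$, which puts us above $\tau$, makes $\ln\ln(a/b)>0$, and justifies the crude replacements — for instance $\ln\ln\bigl(3J\lambda\Gamma/(1-\rho^*)\bigr)$ stays within a constant factor of $\ln\ln\bigl(J\lambda\Gamma/(1-\rho^*)\bigr)$ because $\ln\ln$ grows so slowly. Substituting and accounting for the constants — the leftover $2J\lambda^{-1}$ and the lemma's lower‑order $+a$ term are each dominated by the leading term under (\ref{eq:GammaLarge}) — yields precisely (\ref{Thmlnln}).

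\emph{Main obstacle.} The heart of the argument is Steps 1--2: one must get the last‑passage representation exactly right, with its overlapping endpoints $a_\ell$ and the ``$+1$'' in each block length, and then notice that each intermediate block $\sum_{k=a_\ell}^{a_{\ell+1}}V_k^\ell$ splits into two tail sums, so that after using (\ref{eq:ukLILservicetandem}) only $\phi(r_\ell)$ — never $\phi(n)$ — appears. It is precisely this observation that makes the finite‑horizon constraints (\ref{eq:ukLILarrivaltandem})--(\ref{eq:ukLILservicetandem}) usable at all and renders the bound independent of $n$. The remaining difficulty is proving the calculus lemma of Step 3 with honest constants and separately disposing of the regime $x<e^{e}$; this is routine once (\ref{eq:GammaLarge}) is in force.
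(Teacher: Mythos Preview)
Your proposal is correct and follows essentially the same route as the paper: your max--plus identity in Step~1 is exactly the paper's Lemma~\ref{lemma:L4}, your Step~2 reduction to a one--variable maximum mirrors the paper's derivation of (\ref{equation:TSC(LOG)_Obj}) from Theorem~\ref{theorem:GeneralUncertainty}, and your calculus lemma in Step~3 is the content of Proposition~\ref{prop:Umax} in the Appendix. The only difference is bookkeeping --- you carry $(2J+1)\Gamma$ and $2J\lambda^{-1}$ where the paper gets $2J\Gamma$ and $J\lambda^{-1}$ --- but, as you correctly note, under (\ref{eq:GammaLarge}) this slack is absorbed by the non-tight constant~$7$ in the final bound.
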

Observe that the bound on the sojourn time is explicit. It is expressed directly
in terms of the primitives of the queueing system such as arrival and service rates.
Observe also that the upper bound is independent from $n$. One can think of this bound as a ``steady-state" bound on the
sojourn time in the robust optimization model of the TSC system. Additionally, the constant $\Gamma^2$ is related to the ``variances" of interarrival and service
times viz a vi the LIL (\ref{eq:LILGamma}). It is known that in the stochastic GI/GI/1 queueing system
the expected waiting time in steady state is approximately $(\sigma_a^2+\sigma_s^2)/(2\lambda(1-\rho))$, when
the system is in heavy traffic, namely $\rho\rightarrow 1$. Namely, the expected waiting time depends
linearly on the variances of interarrival and service time. Our bound (\ref{Thmlnln}) is thus
consistent with this type of dependence. On the other hand, unfortunately, our bound depends quadratically
on the number of servers $J$, whereas the correct dependence is known to be linear, at least in some special
cases~\cite{Reiman84},\cite{GamarnikZeevi}.

The bound above does not have a correct $O((1-\rho^*)^{-1})$ scaling, which is known
to be correct from the heavy-traffic theory perspective~\cite{Reiman84},\cite{GamarnikZeevi}.
However, the correction factor is a very slowly
growing function $\ln\ln$. The upshot is that we can use this bound to obtain a bound on $W_n$ and $W_\infty$
in the underlying stochastic system. This is what we do next.

\begin{coro}\label{coro:MainResultTSCWaiting}
For every $n\ge 1$ the sojourn time of the $n$-th job in the TSC(St) queueing network satisfies
\begin{align}\label{eq:TSCWaitingLog}
\E[W_n]\le
\E\Big[\frac{7J^2\Gamma^2\lambda}{1-\rho^*}\ln\ln{\frac{J\lambda\Gamma}{1-\rho^*}}\Big]+J\lambda^{-1}.
\end{align}
where
$\Gamma=\max_j(\sqrt{2}\sigma_a\Gamma_{a,\text{LIL}},\sqrt{2}\sigma_{s,j}\Gamma_{s,j,\text{LIL}},e^{2e}\lambda^{-1})$.
If in addition the assumption (\ref{eq:Winfty}) holds then
\begin{align}\label{eq:TSCWaitinginfty}
\E[W_\infty]\le
\E\Big[\frac{7J^2\Gamma^2\lambda}{1-\rho^*}\ln\ln{\frac{J\lambda\Gamma}{1-\rho^*}}\Big]+J\lambda^{-1}.
\end{align}
\end{coro}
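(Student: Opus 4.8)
The plan is to obtain both inequalities by transferring Theorem~\ref{theorem:MainResultTSCWaitingLog} along sample paths, using Proposition~\ref{prop:LIL} to certify that almost every realization of TSC(St) is a legitimate TSC(RO) instance. The key structural fact I would invoke is that the sojourn time $W_n$, like every performance measure in the model, is one and the same deterministic function of the interarrival and service times $(U_k)_{1\le k\le n}$, $(V^j_k)_{1\le k\le n}$ in TSC(St) and in TSC(RO): the two models obey identical dynamics and both start empty, so $W_n$ is determined by the first $n$ arrivals and services. Hence it suffices to produce, on an event of probability one, random budgets under which the realized primitives satisfy the defining constraints (\ref{eq:ukLILarrivaltandem})--(\ref{eq:ukLILservicetandem}) and the technical requirement (\ref{eq:GammaLarge}); Theorem~\ref{theorem:MainResultTSCWaitingLog} then applies verbatim to that realization, with the random budget in place of the deterministic $\Gamma$.

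First I would fix $n\ge1$ and invoke Proposition~\ref{prop:LIL}, which gives that (\ref{eq:ukLILarrivaltandem})--(\ref{eq:ukLILservicetandem}) hold almost surely with $\Gamma_a=\sqrt2\sigma_a\Gamma_{a,\text{LIL}}$ and $\Gamma_{s,j}=\sqrt2\sigma_{s,j}\Gamma_{s,j,\text{LIL}}$, all of which are a.s.\ finite by the Law of the Iterated Logarithm. To also secure (\ref{eq:GammaLarge}) I would replace every budget by the single (random) quantity
\[
\Gamma \;=\; \max_j\big(\sqrt2\sigma_a\Gamma_{a,\text{LIL}},\ \sqrt2\sigma_{s,j}\Gamma_{s,j,\text{LIL}},\ e^{2e}\lambda^{-1}\big),
\]
which is legitimate because enlarging a budget only relaxes the corresponding constraint, and which forces $\lambda\Gamma\ge e^{2e}$. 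Since $\lambda$, the $\mu_j$, and therefore $\rho^*<1$ are deterministic and common to the two models, Theorem~\ref{theorem:MainResultTSCWaitingLog} yields, almost surely,
\[
W_n \;\le\; \frac{7J^2\Gamma^2\lambda}{1-\rho^*}\ln\ln\frac{J\lambda\Gamma}{1-\rho^*}+J\lambda^{-1}.
\]
As $W_n\ge 0$ and the right-hand side is a.s.\ nonnegative, taking expectations (monotonicity of expectation for nonnegative random variables, the bound being trivial if the right side has infinite mean) gives exactly (\ref{eq:TSCWaitingLog}).

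The one delicate point is that the constraints (\ref{eq:ukLILarrivaltandem})--(\ref{eq:ukLILservicetandem}) involve the \emph{tail} sums $\sum_{k+1\le i\le n}$, so the LIL constant that governs them is the one attached to the reversed sequence $(U_n,U_{n-1},\dots)$, which a priori depends on $n$. I would dispose of this exactly as in the paragraph preceding Proposition~\ref{prop:LIL}: pass to the i.i.d.\ extension with job indices $\dots,-1,0,1,2,\dots$, so that the reversed sequence is a genuine infinite i.i.d.\ sequence and $\Gamma_{a,\text{LIL}}=\sup_{k\ge1}|\sum_{1\le i\le k}(U_{n-i+1}-\lambda^{-1})|/(\sqrt2\sigma_a\phi(k))$ is a.s.\ finite; by shift-stationarity of the extended i.i.d.\ sequence, the \emph{law} of $\Gamma_{a,\text{LIL}}$ (hence of each $\Gamma_{s,j,\text{LIL}}$, hence of $\Gamma$) does not depend on $n$, so the right-hand side of (\ref{eq:TSCWaitingLog}) --- the expectation of a fixed increasing function of $\Gamma$ --- is the same for all $n$. (Alternatively one keeps the $n$-dependent budget $\Gamma^{(n)}$, notes that $\Gamma^{(n)}$ has the same law as the analogous quantity built from \emph{forward} i.i.d.\ blocks, dominates the latter by the corresponding suprema, and uses monotonicity of the bound in $\Gamma$.) Finally, for (\ref{eq:TSCWaitinginfty}) I would let $n\to\infty$ in (\ref{eq:TSCWaitingLog}): the right-hand side is independent of $n$, and $\E[W_n]\to\E[W_\infty]$ by assumption (\ref{eq:Winfty}). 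I expect the main obstacle to be precisely this reconciliation of the tail-sum form of the constraints with an $n$-free bound, together with checking that the positivity and monotonicity used when passing to expectations and to the limit are valid; the rest is an immediate sample-path transfer of Theorem~\ref{theorem:MainResultTSCWaitingLog}.
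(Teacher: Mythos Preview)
Your proposal is correct and follows essentially the same approach as the paper: apply Proposition~\ref{prop:LIL} with the stated choice of $\Gamma$ (including the $e^{2e}\lambda^{-1}$ term to force (\ref{eq:GammaLarge})), invoke Theorem~\ref{theorem:MainResultTSCWaitingLog} almost surely, take expectations, and then pass to the limit via (\ref{eq:Winfty}). Your additional discussion of the tail-sum/$n$-independence issue is more explicit than the paper's terse proof, but it matches exactly the treatment the paper gives in the paragraph preceding Proposition~\ref{prop:LIL}.
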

\begin{proof}
We first assume Theorem~\ref{theorem:MainResultTSCWaitingLog} is established. Note, in the context of the stochastic system, both $W_n$ and $\Gamma$ in Theorem~\ref{theorem:MainResultTSCWaitingLog} are random variables. We take $\Gamma=\max_j(\sqrt{2}\sigma_a\Gamma_{a,\text{LIL}},\sqrt{2}\sigma_{s,j}\Gamma_{s,j,\text{LIL}},e^{2e}\lambda^{-1})$ to satisfy (\ref{eq:GammaLarge}), where $\Gamma_{\cdot,\cdot,\text{LIL}}$ is defined in (\ref{eq:LILGamma}) for the corresponding sequence. Applying Proposition~\ref{prop:LIL} we have that (\ref{Thmlnln}) holds with probability one for the underlying stochastic network. The bound (\ref{eq:TSCWaitingLog}) now follows from taking expectations of both sides of (\ref{Thmlnln}).
The bound (\ref{eq:TSCWaitinginfty})
follows from applying (\ref{eq:Winfty}) to (\ref{eq:TSCWaitingLog}).
\end{proof}

\vspace{.2in}
We now turn our attention to the MCSS queueing model. Our approach for deriving a bound on the workload is based on first obtaining an upper bound on the duration of the busy period. Thus, we first give a bound on the duration of the busy period and then turn to the workload. Recall our assumption $Q(0)=1$, though our results can readily be extended to the general case of $Q(0)\geq 0$. Thus, time $t=0$ marks the beginning of a busy period.

\begin{theorem}\label{theorem:MainResultMCSSWaitingLog}
Given a MCSS(RO) queueing system with constraints (\ref{eq:ukLILarrivalsserver}),(\ref{eq:ukLILservicesserver}),
let $B$ be the duration of the busy period initiated at time $0$.
Then
\begin{align}\label{eq:BupperBoundLog}
B\le {5(4J+3)^2\bar\lambda_{\max}^3\Gamma^4\over (1-\rho)^2}\ln\ln{{2(4J+3)\bar\lambda_{\max}^2\Gamma^2\over 1-\rho}},
\end{align}
\vspace{-.5in}
\begin{align}\label{eq:WupperBoundLog}
\text{and~~~}\sup_{0\le t\le B}W(t) \le {2(4J+3)^2\bar\lambda_{\max}^3\Gamma^4\over 1-\rho}
\ln\ln{(4J+3)\bar\lambda_{\max}^2\Gamma^2\over 1-\rho}+\Gamma+3\bar\lambda_{\max}^2\Gamma^3.
\end{align}
\end{theorem}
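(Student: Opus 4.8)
The plan is to bound the busy period $B$ first, via a self-referential inequality, and then derive the workload bound as an easy consequence. Let $t=0$ mark the start of the busy period and suppose the server works continuously on $[0,B]$. During this interval every job that is eventually routed to server-station $j$ and that arrives by time $B$ must be fully served by time $B$; using the workload identity \eqref{eq:Wt} with $t_0=0$, $t_1=B$ and the fact that $W_B\ge 0$ on a busy period while $W_{B}=0$ at its end, we get $\sum_{i=1}^{\bar A_j(B)} V^j_i$ summed over $j$ is at least $B$ (a work-balance inequality). The idea is then to \emph{over-estimate} the arrivals and \emph{under-estimate} nothing: bound $\bar A_j(B)$ from above in terms of $B$ using the arrival constraints \eqref{eq:ukLILarrivalsserver} together with the definition $\bar A_j = e_j^T[I-P^T]^{-1}A$, and bound each partial sum $\sum_{i\le k}V^j_i$ from above using the service constraints \eqref{eq:ukLILservicesserver}. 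This yields an inequality of the rough shape
\begin{align*}
B \le \sum_j \big(\mu_j^{-1}\bar A_j(B) + \Gamma_{s,j}\phi(\bar A_j(B))\big) + (\text{lower-order arrival terms}),
\end{align*}
and after substituting $\bar A_j(B)\le \bar\lambda_j B + (\text{correction }\lesssim \Gamma\,\phi(B))$ one obtains, using $\sum_j \mu_j^{-1}\bar\lambda_j = \rho$, an inequality of the form
\begin{align*}
B \le \rho B + c_1 \bar\lambda_{\max}\,\Gamma\,\phi(B) + c_2\,\bar\lambda_{\max}^2\Gamma^2,
\end{align*}
i.e. $(1-\rho)B \le c_1 \bar\lambda_{\max}\Gamma\sqrt{B\ln\ln B} + c_2\bar\lambda_{\max}^2\Gamma^2$, with explicit constants $c_1,c_2$ that carry the $(4J+3)$ factors (a crude count of the number of summands and of the $[I-P^T]^{-1}$ blow-up).

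The core analytic step is then to solve this implicit inequality for $B$. One treats it as: if $x \le a\sqrt{x\ln\ln x} + b$ with $a = c_1\bar\lambda_{\max}\Gamma/(1-\rho)$ and $b = c_2\bar\lambda_{\max}^2\Gamma^2/(1-\rho)$, then $x$ cannot exceed roughly $C a^2 \ln\ln(a^2)$ for a universal constant $C$. I would prove this by a bootstrapping argument: first a coarse bound $x \le (\text{const})(a^2 + b)$ ignoring the $\ln\ln$, then feed that back into the $\ln\ln$ term to get $\ln\ln x \le \ln\ln(\text{const}\cdot a^2)$, which (since $\ln\ln$ is monotone and grows glacially) upgrades to $x \le C a^2 \ln\ln a^2$. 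The hypothesis \eqref{eq:GammaLarge}, $\min_j\lambda_j\Gamma \ge e^{2e}$, is exactly what guarantees the arguments of $\phi$ and of $\ln\ln$ are in the regime where $\phi(x)=\sqrt{x\ln\ln x}$ and where $\ln\ln$ is well-defined and monotone, so the substitutions above are legitimate; I would lean on a general-purpose lemma of this ``solve $x\le a\phi(x)+b$'' type, presumably among the technical results deferred to the Appendix. Matching the precise numerical constants $5$, $(4J+3)^2$, the exponent $3$ on $\bar\lambda_{\max}$ and $4$ on $\Gamma$, and the $2(4J+3)$ inside the $\ln\ln$ is then bookkeeping: track where each $\bar\lambda_{\max}$ comes from ($A(B)\sim\bar\lambda B$ contributes one power, $\phi(\bar A_j(B))\sim \sqrt{\bar\lambda_{\max}B}$ another half twice, etc.) and use the crude estimates the authors warn they employ.

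For \eqref{eq:WupperBoundLog}, apply the workload identity \eqref{eq:Wt} again but now with $t_1 = t\in[0,B]$ arbitrary and $t_0=0$: $W(t) = \sum_j \sum_{i=\bar A_j(0)}^{\bar A_j(t)}V^j_i - t \le \sum_j \sum_{i\le \bar A_j(t)} V^j_i$, and now bound each inner sum by $\mu_j^{-1}\bar A_j(t) + \Gamma_{s,j}\phi(\bar A_j(t))$ and then $\bar A_j(t)\le \bar A_j(B)$ using the already-established bound \eqref{eq:BupperBoundLog} on $B$. Since $\sum_j \mu_j^{-1}\bar\lambda_j B = \rho B \le B$, the leading term is governed by $B$ itself, which is $\asymp \bar\lambda_{\max}^3\Gamma^4(1-\rho)^{-2}\ln\ln(\cdots)$, but one extra factor of $(1-\rho)$ is absorbed — the right way is to not substitute the full $B$ bound into the $\mu_j^{-1}\bar A_j$ terms (whose sum telescopes against the $-t$) and only substitute it into the $\Gamma_{s,j}\phi(\bar A_j(t))$ correction terms, giving the stated $O\big(\bar\lambda_{\max}^3\Gamma^4(1-\rho)^{-1}\ln\ln(\cdots)\big)$ plus the additive slack $\Gamma + 3\bar\lambda_{\max}^2\Gamma^3$ coming from the low-order constants. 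The main obstacle throughout is the first step — setting up the work-balance inequality with the \emph{right} direction of every estimate so that the $\rho B$ term appears with coefficient exactly $\rho$ (so that $1-\rho$, not something weaker, ends up in the denominator); getting the routing matrix contribution $\bar A_j = e_j^T[I-P^T]^{-1}A$ folded in cleanly, and keeping the constants honest, is where the real care is needed, whereas solving the resulting scalar inequality is routine given the Appendix lemma.
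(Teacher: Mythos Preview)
Your proposal is correct and follows essentially the same route as the paper. Both arguments bound the ``workload arrived minus time elapsed'' expression $\sum_j\sum_{i\le \bar A_j(t)}V^j_i - t$ from above by something of the form $-(1-\rho)t + C\bar\lambda_{\max}\Gamma^2\,\phi(\bar\lambda_{\max}t)$, using the arrival constraints to control $\bar A_j(t)$ and the service constraints to control the $V$-sums; the paper packages this as three lemmas culminating in the estimate $t(\rho-1)+(4J+3)\bar\lambda_{\max}\Gamma^2\phi(\bar\lambda_{\max}t)$, then invokes the Appendix results (Lemma~\ref{lemma:Unegative} for the busy period, Proposition~\ref{prop:Umax} for the workload) to solve the resulting scalar inequalities --- exactly the ``solve $x\le a\phi(x)+b$'' lemma you anticipated.

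The one point of difference is the workload step: you propose to substitute the already-proved bound on $B$ into the $\phi(\bar A_j(t))$ correction terms, whereas the paper never uses the $B$ bound at all for \eqref{eq:WupperBoundLog} --- it simply observes that $W(t)$ is bounded above by the same expression $-at + 2b\phi(\bar\lambda_{\max}t)$ and maximizes this over \emph{all} $t\ge 0$ via Proposition~\ref{prop:Umax}. Your route works too (dropping the negative $(\rho-1)t$ and bounding $\phi(\bar\lambda_{\max}t)\le\phi(\bar\lambda_{\max}B)$ yields the same scaling), but the paper's direct maximization is cleaner and avoids the circularity. Also, your framing ``work-balance at $t=B$ gives $(1-\rho)B\le\cdots$'' and the paper's ``find the first $t$ where the upper bound turns negative'' are equivalent; the paper's phrasing just makes the role of Lemma~\ref{lemma:Unegative} more transparent.
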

While the bound (\ref{eq:WupperBoundLog}) corresponds to the maximum workload during a given busy period, the
actual value of the bound does not depend on the busy period length explicitly. As it will become apparent from the
proof, we use the same technique for obtaining a bound simultaneously on the duration  of the busy period and maximum
workload during the busy period. Let us now discuss the implications of these bounds for the underlying stochastic model MCSS(St).

\begin{coro}\label{coro:MainResultMCSSWaiting}
Given a MCSS(St) model, suppose the relations (\ref{eq:Wtergodic}),(\ref{eq:Bergodic}),(\ref{eq:Iergodic}) and (\ref{eq:B2ergodic})
hold. Then
\begin{align}
&\E[B_\infty]\le \E\Big[{5(4J+3)^2\bar\lambda_{\max}^3\Gamma^4\over (1-\rho)^2}
\ln\ln{{2(4J+3)\bar\lambda_{\max}^2\Gamma^2\over 1-\rho}}\Big],\label{eq:MCSSBusy}\\
&\E[W_\infty]\le \E\Big[{25(4J+3)^4\bar\lambda_{\max}^6\mu_{\max}\Gamma^8\over (1-\rho)^4}
\Big(\ln\ln{{2(4J+3)\bar\lambda_{\max}^2\Gamma^2\over 1-\rho}}\Big)^2\Big],\label{eq:MCSSWaiting}
\end{align}
where
$\Gamma=\max_j(\sqrt{2}\sigma_{a,j}\Gamma_{a,j,\text{LIL}},\sqrt{2}\sigma_{s,j}\Gamma_{s,j,\text{LIL}},e^{2e}\lambda_{\min}^{-1})$.
\end{coro}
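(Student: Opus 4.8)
The plan is to mimic the reduction used for Corollary~\ref{coro:MainResultTSCWaiting}, with one extra layer: first convert the pathwise busy-period bound of Theorem~\ref{theorem:MainResultMCSSWaitingLog} into a bound on the \emph{steady-state} busy period via the ergodic relations, and then feed the result into~(\ref{eq:boundWinfty}). Set $\Gamma=\max_j(\sqrt{2}\sigma_{a,j}\Gamma_{a,j,\text{LIL}},\sqrt{2}\sigma_{s,j}\Gamma_{s,j,\text{LIL}},e^{2e}\lambda_{\min}^{-1})$, with each $\Gamma_{\cdot,\cdot,\text{LIL}}$ the almost surely finite random constant of~(\ref{eq:LILGamma}) for the corresponding i.i.d.\ sequence; the last term in the maximum is inserted precisely so that $\min_j\lambda_j\Gamma\ge e^{2e}$, i.e.\ so that~(\ref{eq:GammaLarge}) holds with probability one. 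By Proposition~\ref{prop:LIL} the constraints~(\ref{eq:ukLILarrivalsserver}),(\ref{eq:ukLILservicesserver}) then hold almost surely for the MCSS(St) primitives, so Theorem~\ref{theorem:MainResultMCSSWaitingLog} applies along almost every sample path to the busy period initiated at time $0$. Write $g(\Gamma)$ for the right-hand side of~(\ref{eq:BupperBoundLog}).

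The key claim I would establish is that \emph{every} busy period obeys the same bound, i.e.\ $B_i\le g(\Gamma_i)$ almost surely for each $i\ge 1$, where $\Gamma_i$ is the budget of uncertainty of the interarrival and service sequences re-indexed from the onset $\tau_i$ of the $i$-th busy period, and $\Gamma_i$ has the same distribution as $\Gamma$. Granting this, $\E[B_i]\le\E[g(\Gamma)]$ and $\E[B_i^2]\le\E[g(\Gamma)^2]$ for all $i$; averaging over $i\le n$, using $\E[B_\infty]=\lim_n\frac{1}{n}\sum_{1\le i\le n}B_i$ and $\E[B^2_\infty]=\lim_n\frac{1}{n}\sum_{1\le i\le n}B^2_i$ from~(\ref{eq:Bergodic}) and~(\ref{eq:B2ergodic}), and applying Fatou's lemma, gives $\E[B_\infty]\le\E[g(\Gamma)]$ and $\E[B^2_\infty]\le\E[g(\Gamma)^2]$. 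The first of these is exactly~(\ref{eq:MCSSBusy}).

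For~(\ref{eq:MCSSWaiting}) I would combine this with~(\ref{eq:boundWinfty}), $\E[W_\infty]\le\E[B^2_\infty]/\E[B_\infty]$. A direct computation shows that $\mu_{\max}g(\Gamma)^2$ equals the quantity inside the expectation on the right of~(\ref{eq:MCSSWaiting}); thus $\E[B^2_\infty]\le\E[g(\Gamma)^2]$ controls the numerator up to the factor $\mu_{\max}$. For the denominator I would use the crude bound $\E[B_\infty]\ge 1/\mu_{\max}$: a busy period always contains the complete service of its first job, whose expected service time is at least $\min_j m_j=1/\mu_{\max}$, and this passes from the $B_i$ to $B_\infty$ by the same ergodic averaging. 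Hence $1/\E[B_\infty]\le\mu_{\max}$ and $\E[W_\infty]\le\mu_{\max}\E[g(\Gamma)^2]$, which is the right-hand side of~(\ref{eq:MCSSWaiting}). Note that the $\sup_{0\le t\le B}W(t)$ bound~(\ref{eq:WupperBoundLog}) of Theorem~\ref{theorem:MainResultMCSSWaitingLog} is not needed for the corollary, since the passage from $W_\infty$ to busy periods in~(\ref{eq:boundWinfty}) only uses $W(s)\le B_i$ on busy period $i$.

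The step I expect to be the main obstacle is the key claim above: that each $B_i$ satisfies the bound of Theorem~\ref{theorem:MainResultMCSSWaitingLog} with a budget distributed as $\Gamma$. Restarting the system at the onset $\tau_i$ of a busy period is not a clean time-shift of the underlying i.i.d.\ sequences, because the first interarrival of each class seen after $\tau_i$ is a residual (inspection-paradox) interval whose law differs from the original interarrival law; one must therefore check that this $O(1)$ discrepancy is absorbed into the LIL constant (at worst at the cost of an absolute additive slack in $\Gamma$, which can then be reabsorbed into the already-ample constants of~(\ref{eq:BupperBoundLog})), and that $\Gamma_i$ is genuinely equidistributed with $\Gamma$ via a strong-Markov/stopping-time argument at $\tau_i$. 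An alternative that avoids the re-indexing is to prove a sample-path monotonicity---the busy period started from $Q_j(0)=1$ for all $j$ stochastically dominates a typical steady-state busy period, since extra initial workload can never let the server empty sooner---which reduces both expectation bounds to the single pathwise bound already obtained for $B_1$; making this coupling precise across the mismatch between a fresh start and a steady-state onset is itself the crux.
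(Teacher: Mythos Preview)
Your ``alternative'' at the end is in fact the paper's approach, and it is the cleaner of your two routes. The paper makes the domination completely explicit: at the start of a steady-state busy period (where necessarily some class $j_0$ has an arrival), it shifts the first post-$0$ arrival of every other class $j\ne j_0$ down to time $0$, and shifts all subsequent class-$j$ arrivals earlier by the same amount. This produces a modified system that (i) starts with one job in every class, matching the $Q_j(0)=1$ hypothesis of MCSS(RO); (ii) has genuinely i.i.d.\ interarrival sequences from time $0$ onward, so Proposition~\ref{prop:LIL} applies and yields a budget $\Gamma$ with the stated distribution; and (iii) has busy period $\hat B\ge B_\infty$ pathwise, since arrivals were only moved earlier. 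Then Theorem~\ref{theorem:MainResultMCSSWaitingLog} gives $\hat B\le g(\Gamma)$ a.s., and taking expectations (and squares, then expectations) gives $\E[B_\infty]\le\E[g(\Gamma)]$ and $\E[B_\infty^2]\le\E[g(\Gamma)^2]$ directly---no ergodic averaging over the $B_i$ and no Fatou argument is needed, because the paper works with the stationary busy period itself rather than with the sequence $(B_i)$. The denominator bound $\E[B_\infty]\ge 1/\mu_{\max}$ and the use of~(\ref{eq:boundWinfty}) are exactly as you describe.

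Your primary route (re-indexing at each $\tau_i$) has a gap beyond the residual-interarrival issue you flagged: at the onset of a generic busy period only \emph{one} class has a fresh arrival, so the hypothesis $Q_j(0)=1$ of Theorem~\ref{theorem:MainResultMCSSWaitingLog} is not met and the theorem cannot be invoked directly on $B_i$. Fixing this forces you into the same ``add one job to every other class'' coupling anyway, at which point the per-$i$ bookkeeping and the passage through~(\ref{eq:Bergodic}),(\ref{eq:B2ergodic}) become unnecessary overhead compared to the paper's one-shot argument on $B_\infty$.
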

Unfortunately, in this case the scaling of our bounds as $\rho\rightarrow 1$ deviates significantly from the correct
behavior. From the heavy traffic theory~\cite{DaiKurtz}, the correct behavior for the steady-state workload
should be $O((1-\rho)^{-1})$.  As for the steady-state busy period, the theory of M/G/1 queueing system~\cite{kleinrock}
suggests the behavior $O((1-\rho)^{-{3\over 2}})$ as opposed to $O((1-\rho)^{-2}\ln\ln(1-\rho)^{-1})$ which we obtain. On the positive side,
however, we managed to obtain explicit bounds on the performance measures which are expressed directly in terms of the
stochastic primitives of the model, which we do not believe was possible
using prior methods. We leave it as an interesting open problem to derive the performance bounds
based on the robust optimization technique, which lead to the correct scaling behavior as $\rho\rightarrow 1$.

While the proofs of our main results are technically involved, conceptually they are not complicated.
Before we turn to formal proofs, in order to help the reader, we outline below informally some of the key
proof steps for our results.

For the TSC queueing network we first replace the constraints
(\ref{eq:ukLILarrivaltandem}),(\ref{eq:ukLILservicetandem})
with more general constraints, see
(\ref{eq:Gammaj}) and (\ref{eq:Gamma}) below.
Our results for the TSC network rely mostly on the Lindley's type recursion which in
a single server queueing system recursively represents in the waiting time of the $n$-th job in terms of the interarrival
and service times of the first  $n$ jobs. It is classical result of the queueing theory that this waiting time
can be thought of as maximum of a random walk, with steps equalling in distribution to the difference between the
interarrival and service times. We derive a similar relation in the form of a bound on the sojourn time of the
$n$-th job in the TSC network. This bound is given in Theorem~\ref{theorem:GeneralUncertainty}.
Then we view this bound as an optimization problem and obtain a bound on the objective value by proving the concavity
of the objective function and substituting explicit bounds from constraints
(\ref{eq:ukLILarrivaltandem}),(\ref{eq:ukLILservicetandem}).

Our proofs for the MCSS queueing system rely on the relation (\ref{eq:Wt}). Namely, we take advantage of the fact
that the workload is depleted with the unit rate during the busy period. Then we take advantage of the constraints
(\ref{eq:ukLILarrivalsserver}),(\ref{eq:ukLILservicesserver}) to show that in the MCSS(RO) system the workload at time $t$ during the busy period can be upper bounded by an expression of the form $-at+b\sqrt{t\ln\ln t}+c$ with strictly positive $a,b$.
It is then not hard to obtain an explicit estimated $t_0$ such that this expression is negative for $t>t_0$. Since this expression
is an upper bound on a non-negative quantity (workload), then the duration of the busy period cannot be larger than $t_0$.
This leads to an upper bound on the duration of the busy period in the MCSS(RO) system. In order to obtain a bound on the workload, we again
take advantage of (\ref{eq:Wt}) and further obtain explicit upper bounds on the terms involving the sums of service times. We show that the workload at time $t$ is at most $-at+b\sqrt{t\ln\ln t}+c$. We then obtain an upper bound on the
workload during the busy period by obtaining explicit bounds on $\max_{t\ge 0} -at+b\sqrt{t\ln\ln t}+c$.

Our derivation of the bounds for the stochastic model MCSS(St) relies on the ergodic representation
(\ref{eq:Wtergodic}). We consider a modified system in which each busy period is initiated with simultaneous
arrival of one job into \emph{every} buffer $j$. This leads to a alternating renewal process with alternating
i.i.d. busy and idle periods. We then obtain a bound on the steady-state workload in terms of the second
moment of the busy period in the modified queueing system, using the renewal theory type arguments. It is
this necessity to look at the second moment of the busy period which leads to a conservative scaling
$O\Big((1-\rho)^{-4}(\ln\ln(1-\rho)^{-1})^2\Big)$ in our bound (\ref{eq:MCSSWaiting}) on the steady-state
workload.

\section{Tandem single class queueing system analysis: proof of Theorem \ref{theorem:MainResultTSCWaitingLog}}\label{section:tandemAnalysis}
In order to prove Theorem~\ref{theorem:MainResultTSCWaitingLog} we first generalize constraints (\ref{eq:ukLILarrivaltandem}),(\ref{eq:ukLILservicetandem}) and obtain a method for bounding $W_n$ under more general uncertainty assumptions.

\subsection{General upper bound on the sojourn times}
Given a sequence of non-negative real values
$\Gamma_{\min}^{j}(k), \Gamma_{\max}^{j}(k)$ $1\leq j\leq J,1\leq k\leq n$, $\Gamma_{\min}(k), \Gamma_{\max}(k)$ $1\leq k \leq n$,
we consider the set of
all sequences of service times and interarrival times $(V_i^j),(U_i)$ $j=1,\ldots,J$, $i=1,\ldots,n$ satisfying
for all $k=1,\ldots,n$
\begin{align}
\Gamma_{\min}^{j}(k)&\leq \sum_{i=k}^{n}V_{i}^j \leq \Gamma_{\max}^{j}(k), \label{eq:Gammaj} \\
\Gamma_{\min}(k)&\leq \sum_{i=k}^{n}U_{i} \leq \Gamma_{\max}(k), \label{eq:Gamma}\\
V_i^j, U_i &\geq 0. \notag
\end{align}

In the next theorem we obtain a bound on the sojourn time of the $n$-th job in TSC(RO) system in terms of values $\Gamma_{\min}^{j}(k),\Gamma_{\max}^{j}(k),\Gamma_{\min}(k),\Gamma_{\max}(k)$.

\begin{theorem}\label{theorem:GeneralUncertainty}
Suppose the relations (\ref{eq:Gammaj}) and (\ref{eq:Gamma}) hold. Then
\bea W_n \leq \max_{n\geq k_J\geq\ldots\geq k_1\geq
1}\sum_{j=1}^{J-1}\big(\Gamma^{j}_{\max}(k_j)-\Gamma^{j}_{\min}(k_{j+1}+1)\big)+\Gamma^{J}_{\max}(k_J)-\Gamma_{\min}(k_1+1)
\eea
\end{theorem}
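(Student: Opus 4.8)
The plan is to reduce the sojourn time of the $n$-th job to an explicit max-plus (last-passage) expression in the interarrival and service times, and then to apply the constraints (\ref{eq:Gammaj})--(\ref{eq:Gamma}) term by term.

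First I would set up the Lindley-type recursion for the tandem network. Writing $a_n=\sum_{i=1}^{n}U_i$ for the arrival time of job $n$ to the first buffer and $D^j_n$ for its departure time from server $j$, FIFO together with work conservation gives $D^1_n=\max(D^1_{n-1},a_n)+V^1_n$ and $D^j_n=\max(D^j_{n-1},D^{j-1}_n)+V^j_n$ for $2\le j\le J$, with the convention $D^j_0=0$ justified by $Q(0)=0$. Unrolling this recursion in $n$, one server at a time, I expect to obtain the classical representation
\begin{align*}
D^J_n=\max_{1\le k_1\le k_2\le\cdots\le k_J\le n}\Big(a_{k_1}+\sum_{j=1}^{J-1}\sum_{i=k_j}^{k_{j+1}}V^j_i+\sum_{i=k_J}^{n}V^J_i\Big).
\end{align*}
I would prove this by induction on $J$: the base case is the standard single-server Lindley identity $D^1_n=\max_{1\le k\le n}(a_k+\sum_{i=k}^{n}V^1_i)$, obtained by a routine induction on $n$; the inductive step substitutes the formula for $D^{J-1}_{k_J}$ into $D^J_n=\max_{1\le k_J\le n}(D^{J-1}_{k_J}+\sum_{i=k_J}^{n}V^J_i)$, the latter coming from the same one-server unrolling with the departure process of server $J-1$ playing the role of the arrival stream. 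Along the way one checks that the terms involving $D^j_0=0$ are always dominated, so they may be dropped from the maxima.

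Next, since the sojourn time is $W_n=D^J_n-a_n$ and $a_{k_1}-a_n=-\sum_{i=k_1+1}^{n}U_i$, I get
\begin{align*}
W_n=\max_{1\le k_1\le\cdots\le k_J\le n}\Big(-\sum_{i=k_1+1}^{n}U_i+\sum_{j=1}^{J-1}\sum_{i=k_j}^{k_{j+1}}V^j_i+\sum_{i=k_J}^{n}V^J_i\Big).
\end{align*}
Now I bound each summand using the hypotheses, adopting the boundary convention $\Gamma^j_{\min}(n+1)=\Gamma_{\min}(n+1)=0$ to match empty sums. For a block of service times I would write $\sum_{i=k_j}^{k_{j+1}}V^j_i=\sum_{i=k_j}^{n}V^j_i-\sum_{i=k_{j+1}+1}^{n}V^j_i$ and apply (\ref{eq:Gammaj}) to each piece, obtaining $\sum_{i=k_j}^{k_{j+1}}V^j_i\le\Gamma^j_{\max}(k_j)-\Gamma^j_{\min}(k_{j+1}+1)$ for $1\le j\le J-1$; for the last server I use $\sum_{i=k_J}^{n}V^J_i\le\Gamma^J_{\max}(k_J)$ directly; and for the arrivals $-\sum_{i=k_1+1}^{n}U_i\le-\Gamma_{\min}(k_1+1)$ from (\ref{eq:Gamma}). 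Substituting these into the displayed formula for $W_n$ and keeping the maximum over $1\le k_1\le\cdots\le k_J\le n$ produces exactly the claimed inequality.

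The step I expect to be the main obstacle is purely bookkeeping: pinning down the last-passage representation of $D^J_n$ with the index ranges exactly right --- in particular the shared endpoint $k_{j+1}$ between consecutive service-time blocks, the off-by-one between the block $\sum_{i=k_j}^{k_{j+1}}$ and the constraint argument $k_{j+1}+1$, and the verification that each $D^j_0=0$ term is genuinely dominated so it can be discarded. Once that representation is nailed down, the remainder is a one-line term-by-term estimate.
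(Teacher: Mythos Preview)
Your proposal is correct and follows essentially the same strategy as the paper: both establish the exact last-passage identity
\[
W_n=\max_{1\le k_1\le\cdots\le k_J\le n}\Big(\sum_{j=1}^{J-1}\sum_{i=k_j}^{k_{j+1}}V^j_i+\sum_{i=k_J}^{n}V^J_i-\sum_{i=k_1+1}^{n}U_i\Big)
\]
(this is the paper's Lemma~\ref{lemma:L4}) and then bound each block of service and interarrival times by the corresponding $\Gamma$-quantities, exactly as you do.

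The only difference is in how the identity is proved. You work with departure times $D^j_n$, unroll the single-server recursion to get $D^J_n=\max_{k_J}\big(D^{J-1}_{k_J}+\sum_{i=k_J}^{n}V^J_i\big)$, and induct on $J$ by peeling off the last server. The paper instead works with sojourn times $W^{j,S}_n$ and interarrival times $U^j_i$ at each server, and its inductive step peels off the \emph{first} server using Corollary~\ref{coro:C1}, which rewrites $W_{k_2}^{1,S}-\sum_{i=k_2+1}^{n}U_i^1$ as $W_n^{1,S}-\sum_{i=k_2+1}^{n}U_i^2$ via a telescoping identity involving idle times. Your departure-time formulation is the more standard last-passage argument and avoids the idle-time bookkeeping; the paper's route is slightly more indirect but lands on the same formula. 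Either way, once Lemma~\ref{lemma:L4} is in hand, the theorem is immediate.
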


We now show how Theorem~\ref{theorem:GeneralUncertainty} implies our main result Theorem~\ref{theorem:MainResultTSCWaitingLog}.

\begin{proof}[Proof of Theorem \ref{theorem:MainResultTSCWaitingLog}]
The proof consists of two steps: the first step uses Theorem \ref{theorem:GeneralUncertainty} to
bound $W_n$ with uncertainty sets
(\ref{eq:ukLILarrivaltandem}),(\ref{eq:ukLILservicetandem}). The second step involves solving
some associated maximization problem.

We set $\Gamma_{\min}(k)=\lambda^{-1}(n+1-k)-\Gamma_a\phi({n+1-k}),
\Gamma_{\max}(k)=\lambda^{-1}(n+1-k)+\Gamma_a\phi({n+1-k}),
\Gamma^j_{\min}(k)=\mu_j^{-1}(n+1-k)-\Gamma_{s,j}\phi({n+1-k}),
\Gamma^j_{\max}(k)=\mu_j^{-1}(n+1-k)+\Gamma_{s,j}\phi({n+1-k})$, where $\phi$ is defined by (\ref{eq:phi}).
From Theorem \ref{theorem:GeneralUncertainty}
we obtain:
\begin{align*}
 W_n &\leq \underset{n\geq k_J \geq \ldots\geq k_1\geq 1}{\max}\sum_{j=1}^{J-1}\big(\mu_j^{-1}(n+1-k_j)+\Gamma_{s,j}\phi{(n+1-k_{j})}\big)\\
 &~~~-\sum_{j=1}^{J-1}\big(\mu_j^{-1}(n+1-k_{j+1}-1)-\Gamma_{s,j}\phi{(n+1-k_{j+1}-1)}\big)\\
&~~~+\big(\mu_J^{-1}(n+1-k_J) + \Gamma_{s,J}\phi{(n+1-k_J)}\big)~-~\big(\lambda^{-1}(n+1-k_1-1)-\Gamma_a \phi{(n+1-k_1-1)}\big)
\end{align*}
Since $n\geq k_{j+1}\ge k_j ~~\forall j$, we can replace $\mu_j^{-1}$ by
$\mu_{\min}^{-1}=\max(\mu_1^{-1},\mu_2^{-1}\ldots,\mu_J^{-1})<\lambda^{-1}$ and preserve
inequality. Similarly, we can replace
$\Gamma_{s,1},\Gamma_{s,2},\ldots,\Gamma_{s,J},\Gamma_a$ by $\Gamma$. We obtain:
\begin{align*}
 W_n &\leq \underset{n\geq k_J \geq \ldots\geq k_1\geq 1}{\max}\sum_{j=1}^{J-1}\Big[\mu_{\min}^{-1}\big(k_{j+1}+1-k_j\big)+
\Gamma\big(\phi{(n+1-k_j)}+ \phi{(n-k_{j+1})}\big)\Big]\\
&~~~+\big(\mu_{\min}^{-1}(n+1-k_J) + \Gamma\phi{(n+1-k_J)}\big)-\big(\lambda^{-1}(n-k_1)-\Gamma \phi{(n-k_1)}\big)\\
&\leq \underset{n\geq k_1\geq 1}{\max}\mu_{\min}^{-1}(n-k_1)+
2J\Gamma\phi{(n+1-k_1)}\\
&~~~+J\mu_{\min}^{-1} -\lambda^{-1}(n-k_1)     \text{~~ where we used $k_1\leq k_2\leq\ldots\leq k_J$ to combine $\Gamma$ terms}\\
&= \underset{n\geq k_1 \geq 1}{\max}(n+1-k_1)(\mu_{\min}^{-1}-\lambda^{-1})+
2J\Gamma\phi{(n+1-k_1)}+(J-1)\mu_{\min}^{-1}+\lambda^{-1}\\
&\leq \underset{n\geq k_1 \geq 1}{\max}(n+1-k_1)(\mu_{\min}^{-1}-\lambda^{-1})+
2J\Gamma\phi{(n+1-k_1)}+J\lambda^{-1}    \text{~~~since $\lambda^{-1}>\mu_{\min}^{-1}$}
\end{align*}
We let $x=n+1-k_1$. Since $1\leq k_1 \leq n$ we have that $1\leq x\leq n$ and obtain:
\begin{align}
W_n&\leq \underset{n\geq x \geq 1}{\max}x(\mu_{\min}^{-1}-\lambda^{-1})+
2J\Gamma\phi{(x)}+J\lambda^{-1} \notag\\
&\leq \underset{x \geq 1}{\max}~x(\mu_{\min}^{-1}-\lambda^{-1})+
2J\Gamma\phi{(x)}+J\lambda^{-1}\label{equation:TSC(LOG)_Obj}
\end{align}
Putting $a=\lambda^{-1}-\mu_{\min}^{-1}, b=J\Gamma, c=J\lambda^{-1}$, and using the assumption
(\ref{eq:GammaLarge}), we have
$b/a=\lambda J\Gamma/(1-\rho^*)\ge e^{2e}$, namely, the condition (\ref{eq:bovera}) is satisfied.
Applying Proposition~\ref{prop:Umax} from Appendix  we obtain
\begin{align*}
W_n\le {7\lambda J^2\Gamma^2\over 1-\rho^*}\ln\ln{\lambda J\Gamma\over 1-\rho^*}+J\lambda^{-1}.
\end{align*}
This completes the proof of the theorem.
\end{proof}

\subsection{Proof of Theorem \ref{theorem:GeneralUncertainty}}
Job $1$ enters the system first,
followed by jobs $2,3,\ldots,n$.
Let $U^j_i$ be the time between the arrival of job $i$ and job $i-1$ into server $j$ for $i=2,\ldots,n$ and $j=1,\ldots,J$. Specifically, $U^1_i=U_i$, and we define $U^j_1=V^{j-1}_1$ for $j=2,\ldots,J$. The following relations are well known in the queueing theory~\cite{kleinrock}.
\bea
W_i^j=\max(W_{i-1}^{j}+V_{i-1}^{j}-U_i^{j},0)
&&\text{~~~ $\forall$ $i=2,\ldots,n$, $j=1,\ldots,J$,}
\label{basicrelation} \\
U_i^j=V_i^{j-1}+I_i^{j-1}
&&\text{~~~ $\forall$ $i=2,\ldots,n$, $j=2,\ldots,J$,}
\label{lemma:L1}\\
W_i^j=\max\Big\{\max_{1\leq k\leq
i-1}\sum_{l=k}^{i-1}\big(V_{l}^j-U_{l+1}^j\big),0\Big\}
&&\text{~~~ $\forall$ $i=2,\ldots,n$, $j=1,\ldots,J$,}
\label{lemma:L2}\\
W_{i-1}^j=W_i^j-I_i^j-(V_{i-1}^j-U_i^j)
&&\text{~~~ $\forall$ $i=2,\ldots,n$, $j=1,\ldots,J$.}
\label{lemma:L3}
\eea

 We now prove some more detailed
results regarding the dynamics of our queueing system.

\begin{coro}\label{coro:C1}
The following relations  hold for  $k=2,\ldots,n-1$: \bea
\sum_{i=k+1}^{n}U_i^2 = \sum_{i=k+1}^{n}(V_{i}^1+I_{i}^1)
=W_n^1-W_{k}^1+\sum_{i=k+1}^{n}U_i^1+V_n^1-V_{k}^1 \nonumber.
\eea
\end{coro}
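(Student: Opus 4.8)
The plan is to read off both equalities directly from the deterministic Lindley-type recursions (\ref{basicrelation})--(\ref{lemma:L3}), specialized to the first server $j=1$ together with the inter-server coupling relation (\ref{lemma:L1}) between servers $1$ and $2$. No optimization or estimation is involved; the whole argument is a pair of telescoping sums, so I will only need to be careful with index shifts.

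For the first equality, I would invoke (\ref{lemma:L1}) with $j=2$: for every $i$ with $2\le i\le n$ one has $U_i^2=V_i^1+I_i^1$. Since the hypothesis $k\ge 2$ forces $k+1\ge 3\ge 2$, every index in the range $i=k+1,\ldots,n$ lies in the domain of validity of this identity, and summing it over that range yields $\sum_{i=k+1}^{n}U_i^2=\sum_{i=k+1}^{n}(V_i^1+I_i^1)$.

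For the second equality, I would rearrange (\ref{lemma:L3}) with $j=1$ to isolate the idle time, $I_i^1=(W_i^1-W_{i-1}^1)-V_{i-1}^1+U_i^1$, again valid for $2\le i\le n$ and hence for all $i$ in the summation range. Summing over $i=k+1,\ldots,n$, the $W$-differences telescope to $W_n^1-W_k^1$, while the reindexing $\sum_{i=k+1}^{n}V_{i-1}^1=\sum_{i=k}^{n-1}V_i^1$ handles the service-time block. Adding $\sum_{i=k+1}^{n}V_i^1$ to both sides and cancelling the common block $V_{k+1}^1,\ldots,V_{n-1}^1$ shared by $\sum_{i=k+1}^{n}V_i^1$ and $\sum_{i=k}^{n-1}V_i^1$ leaves precisely $V_n^1-V_k^1$, producing $\sum_{i=k+1}^{n}(V_i^1+I_i^1)=W_n^1-W_k^1+\sum_{i=k+1}^{n}U_i^1+V_n^1-V_k^1$, which is the claimed formula.

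There is no real obstacle here: the only points requiring attention are the bookkeeping of the index shift $\sum_{i=k+1}^{n}V_{i-1}^1=\sum_{i=k}^{n-1}V_i^1$ together with the ensuing cancellation, and the observation that the stated ranges of validity of (\ref{lemma:L1}) and (\ref{lemma:L3}), namely $2\le i\le n$, indeed cover the summation index set, which holds because $k\ge 2$.
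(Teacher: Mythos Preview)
Your proposal is correct and follows essentially the same argument as the paper: the first equality comes directly from (\ref{lemma:L1}) with $j=2$, and the second from rearranging (\ref{lemma:L3}) with $j=1$ and telescoping, exactly as you describe. The paper's proof is terser but identical in substance; your added care about the index ranges and the shift $\sum_{i=k+1}^{n}V_{i-1}^1=\sum_{i=k}^{n-1}V_i^1$ simply makes explicit what the paper leaves to the reader.
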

\begin{proof} The first equality follows from (\ref{lemma:L1}). To prove the second equality
we use (\ref{lemma:L3}) to obtain
\begin{align*}
\sum_{i=k+1}^{n}(V_{i}^1+I_{i}^1)&= \sum_{i=k+1}^{n} (W_{i}^1-W_{i-1}^1+U_i^1)+V^1_n-V^1_k,
\end{align*}
and the result follows.
\end{proof}

\begin{lemma}\label{lemma:L4}
\bea
W_n=\underset{n\geq k_J\geq\ldots\geq k_1\geq1}{\max}\sum_{i=k_1}^{k_2}V_i^1
+\sum_{i=k_2}^{k_3}V_i^2+\ldots+\sum_{i=k_{J-1}}^{k_{J}}V_i^{J-1}+\sum_{i=k_J}^{n}V_i^J-\sum_{i=k_1+1}^{n}U_i^1.
\eea
\end{lemma}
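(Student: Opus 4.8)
The plan is to recast the claim in terms of \emph{departure epochs} and prove it by a double induction on the station index. For $1\le m\le J$ let $D^{(m)}_i$ denote the time job $i$ leaves server $m$, and put $D^{(0)}_i:=\sum_{l=1}^{i}U^1_l$, the epoch at which job $i$ arrives to server $1$. Since job $n$ enters the system at $D^{(0)}_n$ and exits at $D^{(J)}_n$, we have $W_n=D^{(J)}_n-D^{(0)}_n$, so it is enough to prove, for every $m$,
\begin{align*}
D^{(m)}_n=\max_{1\le k_1\le\cdots\le k_m\le n}\Big(D^{(0)}_{k_1}+\sum_{i=k_1}^{k_2}V^1_i+\sum_{i=k_2}^{k_3}V^2_i+\cdots+\sum_{i=k_{m-1}}^{k_m}V^{m-1}_i+\sum_{i=k_m}^{n}V^m_i\Big),
\end{align*}
then take $m=J$ and subtract $D^{(0)}_n$, using $D^{(0)}_{k_1}-D^{(0)}_n=-\sum_{i=k_1+1}^{n}U^1_i$ to recover the $U$-term in the lemma.

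The engine is the elementary FIFO single-server identity $D^{(m)}_i=\max\big(D^{(m)}_{i-1},\,D^{(m-1)}_i\big)+V^m_i$ for $i\ge 2$, together with $D^{(m)}_1=D^{(m-1)}_1+V^m_1$. This is merely a rewriting of \eqref{basicrelation}--\eqref{lemma:L1}: indeed $D^{(m)}_i=D^{(m-1)}_i+W^m_i+V^m_i$, while $D^{(m-1)}_i-D^{(m-1)}_{i-1}=U^m_i=V^{m-1}_i+I^{m-1}_i$, and substituting these into the Lindley recursion \eqref{basicrelation} for $W^m_i$ converts its two branches into $D^{(m)}_{i-1}$ and $D^{(m-1)}_i$. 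Unrolling this recursion in $i$ (the inner induction, using $\max(a,b)+c=\max(a+c,b+c)$) gives $D^{(m)}_n=\max_{1\le k\le n}\big(D^{(m-1)}_k+\sum_{i=k}^{n}V^m_i\big)$. The outer induction on $m$ then closes: the case $m=1$ is this identity read with $D^{(0)}$; for the step one substitutes the inductive hypothesis for $D^{(m-1)}_k$ inside the last maximum and merges the nested maxima, renaming the inner index $k_m:=k$ --- the constraint $k_{m-1}\le k_m$ emerges automatically and the consecutive blocks $\sum_{i=k_{m-1}}^{k_m}V^{m-1}_i$ and $\sum_{i=k_m}^{n}V^m_i$ share the endpoint $k_m$ exactly as displayed. (One can also proceed by induction on $J$ without introducing $D^{(m)}$: \eqref{lemma:L2} gives $W^j_n+V^j_n=\max_{1\le k\le n}\big(\sum_{i=k}^{n}V^j_i-\sum_{i=k+1}^{n}U^j_i\big)$ at each station, and the station-$j$ analogue of Corollary~\ref{coro:C1}, namely $\sum_{i=k+1}^{n}U^{j+1}_i=W^j_n-W^j_k+V^j_n-V^j_k+\sum_{i=k+1}^{n}U^j_i$ coming from \eqref{lemma:L1},\eqref{lemma:L3}, is what couples the maximizing indices across stations and eliminates the intermediate interarrival sums; that route, however, still needs its own induction to fuse the per-station maxima into a single one.)

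The hard part is expected to be purely organizational rather than conceptual. One must keep inclusive versus exclusive endpoints straight at the junction indices $k_j$ (each $V$-block sharing an endpoint with the next), reconcile the ``$+1$'' shift in $\sum_{i=k_1+1}^{n}U^1_i$ with the convention $D^{(0)}_{k_1}=\sum_{i=1}^{k_1}U^1_i$, verify that the ``$\vee 0$'' in the per-station Lindley law --- equivalently the alternative branch $D^{(m)}_{i-1}$ in the recursion --- is absorbed by letting $k_j$ sit at the right end of its range (an empty $V$-sum: job $n$ does not wait at station $j$ and that server is continuously busy up to job $n$), and check the left boundary $i=1$ against the conventions $U^j_1=V^{j-1}_1$. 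Carrying the argument through $D^{(m)}$ is what keeps this tractable: it yields an exact identity at each stage, so no separate ``$\le$'' and ``$\ge$'' arguments --- the latter would otherwise require exhibiting an explicit nested critical path of jobs realizing the bound --- are needed.
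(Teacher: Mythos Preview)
Your proposal is correct and takes a genuinely different route from the paper's proof. The paper argues by induction on $J$ directly in terms of the per-station sojourn times $W_n^{j,S}=W_n^j+V_n^j$: it establishes the base case $J=1$ from \eqref{lemma:L2}, then in the inductive step isolates the inner maximum over $k_1$ (which equals $W_{k_2}^{1,S}$ by the base case) and invokes Corollary~\ref{coro:C1} to rewrite $W_{k_2}^{1,S}-\sum_{i=k_2+1}^{n}U^1_i$ as $W_n^{1,S}-\sum_{i=k_2+1}^{n}U^2_i$, thereby peeling off station~$1$ and reducing to the $(J-1)$-server statement. Your argument bypasses Corollary~\ref{coro:C1} entirely by passing to departure epochs $D^{(m)}_i$ and using the tandem recursion $D^{(m)}_i=\max\big(D^{(m)}_{i-1},D^{(m-1)}_i\big)+V^m_i$; once in this form the nested maximum falls out of a double induction (on $i$, then on $m$), and subtracting $D^{(0)}_n$ recovers the $U$-sum. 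The departure-time route is the classical critical-path/last-passage representation for FIFO tandem queues and is arguably cleaner---no auxiliary identity relating $\sum_i U^{j}_i$ to $\sum_i U^{j+1}_i$ is needed---while the paper's approach has the advantage of staying within the waiting-time variables already introduced and of reusing \eqref{lemma:L2} and Corollary~\ref{coro:C1} explicitly. Your parenthetical alternative (induct on $J$ via \eqref{lemma:L2} and the station-$j$ analogue of Corollary~\ref{coro:C1}) is in fact precisely the paper's argument.
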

\begin{proof}
We prove Lemma \ref{lemma:L4} by induction. We let $W_i^{j,S}=W_i^j+V_i^j$ denote the sojourn time of customer $i$ in server $j$.\\
\emph{Case $J=1$:} We first define $\sum_{i=j+1}^j\equiv0$ for all $j$. Using
(\ref{lemma:L2}) and $V_i^j\geq 0$ we have for any $n=2,\ldots,n$:
\beaa W_n^{1,S}&= &\max\Big(\underset{n-1\geq k_1\geq 1}{\max}\sum_{i=k}^{n-1}\big(V_{i}^1-U_{i+1}^1\big),0\Big) +V_{n}^1\\
&=&\max\Big(\underset{n\geq k_1\geq 1}{\max}\sum_{i=k_1}^{n}V_{i}^1-\sum_{i=k_1+1}^{n}U_i^1,V_n^1\Big) \\
&=&\underset{n\geq k_1\geq 1}{\max}\Big(\sum_{i=k_1}^{n}V_{i}^1-\sum_{i=k_1+1}^{n}U_i^1\Big)
\text{~~~~and this completes case $J=1.$}
\eeaa
\emph{Case $J>1$:}
Note that $W_n=W_n^{1,S}+(W_n^{2,S}+\ldots+W_n^{J,S})$ and denotes the sojourn time of job $n$ in $J$-server system. We suppose that the result holds for a $J-1$ tandem system and proceed by induction:
\beaa
 &&\underset{n\ge  k_J\geq\ldots\geq k_1\geq1}{\max}\sum_{i=k_1}^{k_2}V_i^1+\sum_{i=k_2}^{k_3}V_i^2+\ldots+\sum_{i=k_{J-1}}^{k_{J}}V_i^{J-1}+\sum_{i=k_J}^nV_i^J-\sum_{i=k_1+1}^nU_i^1\\
 &=&\underset{n\ge  k_J\geq\ldots\geq k_1\geq1}{\max}\Big(\sum_{i=k_1}^{k_2}V_i^1-\sum_{i=k_1+1}^{k_2}U_i^1\Big)-\sum_{i=k_2+1}^nU_i^1+\sum_{i=k_2}^{k_3}V_i^2+\ldots+\sum_{i=k_{J-1}}^{k_{J}}V_i^{J-1}+\sum_{i=k_J}^nV_i^J\\
 &=&\underset{n\ge  k_J\geq\ldots\geq k_2\geq1}{\max}\Big[\underset{k_1: k_2\geq k_1\geq 1}{\max}\Big(\sum_{i=k_1}^{k_2}V_i^1-\sum_{i=k_1+1}^{k_2}U_i^1\Big)-\sum_{i=k_2+1}^nU_i^1+\sum_{i=k_2}^{k_3}V_i^2+\ldots+\sum_{i=k_{J-1}}^{k_{J}}V_i^{J-1}+\sum_{i=k_J}^nV_i^J\Big]\\
 &=&\underset{n\ge  k_J\geq\ldots\geq k_2\geq1}{\max}W_{k_2}^{1,S}-\sum_{i=k_2+1}^nU_i^1+\sum_{i=k_2}^{k_3}V_i^2+\ldots+\sum_{i=k_{J-1}}^{k_{J}}V_i^{J-1}+\sum_{i=k_J}^nV_i^J
  \text{~~the base case $J=1$ is used}\\
 &=&\underset{n\ge  k_J\geq\ldots\geq k_2\geq1}{\max}\Big(W_{k_2}^{1,S}-\sum_{i=k_2+1}^nU_i^1\Big)+\sum_{i=k_2}^{k_3}V_i^2+\ldots+\sum_{i=k_{J-1}}^{k_{J}}V_i^{J-1}+\sum_{i=k_J}^nV_i^J\\
 &=&\underset{n\ge  k_2\geq\ldots\geq k_J\geq1}{\max}\Big(W_n^{1,S}-\sum_{i=k_2+1}^nU_i^2\Big)+\sum_{i=k_2}^{k_3}V_i^2+\ldots+\sum_{i=k_{J-1}}^{k_{J}}V_i^{J-1}+\sum_{i=k_J}^nV_i^J\\
 &&\text{we used Corollary \ref{coro:C1} and  $W_{k_2}^{1,S}=W_{k_2}^1+V_{k_2}^1$}\\
 &=&W_n^{1,S}+\underset{n\ge  k_J\geq\ldots\geq k_2\geq1}{\max}\sum_{i=k_2}^{k_3}V_i^2+\ldots+\sum_{i=k_{J-1}}^{k_{J}}V_i^{J-1}+\sum_{i=k_J}^nV_i^J-\sum_{i=k_2+1}^nU_i^2\\
 &=&W_1^{1,S}+(W_n^{2,S}+\ldots+W_n^{J,S})
 \text{~~by inductive assumption on $J-1$ server system}
\eeaa
and the proof follows from definition of sojourn time $W_n$.
\end{proof}

\begin{proof}[Proof of Theorem \ref{theorem:GeneralUncertainty}]
The result follows immediately from Lemma~\ref{lemma:L4}.
\end{proof}

\section{Multiclass single server analysis: proofs of main results}\label{section:MCSS_Analysis}

\subsection{Proof of Theorem \ref{theorem:MainResultMCSSWaitingLog}} \label{subsection:MCSS(RO)sqrtlnln}

\begin{lemma} \label{lemma:1subsection:MCSS(RO)sqrtlnln}
For every $t$ satisfying
\begin{align}
t\ge\max_j(\lambda_j^{-1}e^e,\lambda_j^{-1}+3\lambda_j^{-1}\lambda_{\max}^2\Gamma^2),\label{eq:lowert}
\end{align}
the following holds: $A_j(t)\leq t\lambda_j+3\lambda_j^2\Gamma^2\phi(t\lambda_j).$
\end{lemma}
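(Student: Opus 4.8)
The plan is a self-bounding (bootstrap) argument: I would extract from the definition of $A_j(t)$ a single inequality bounding $A_j(t)$ in terms of $\phi(A_j(t))$ itself, and then show this forces $A_j(t)$ below the claimed value. Set $k:=A_j(t)$. By definition $\sum_{1\le i\le k}U^j_i\le t$, and the lower half of constraint (\ref{eq:ukLILarrivalsserver}) gives $\lambda_j^{-1}k-\Gamma_{a,j}\phi(k)\le\sum_{1\le i\le k}U^j_i$. Combining these and using $\Gamma_{a,j}\le\Gamma$ yields the \emph{basic inequality} $k\le\lambda_j t+\lambda_j\Gamma\,\phi(k)$. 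Abbreviating $y=\lambda_j t$ and $G=\lambda_j\Gamma$, the hypotheses supply $y\ge e^e$, $y\ge 1+3\lambda_{\max}^2\Gamma^2\ge 3G^2$ (since $\lambda_j\le\lambda_{\max}$), and $G\ge e^{2e}$ by (\ref{eq:GammaLarge}), so that $y\ge 3e^{4e}$ is in fact very large. The cases $k=0$ and $1\le k<e^e$ are trivial, as $y\ge e^e>k$ there; so I assume $k\ge e^e$, i.e. $\phi(k)=\sqrt{k\ln\ln k}$, and the goal becomes $k\le k_0:=y+3G^2\phi(y)$.

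The bootstrap proceeds by contradiction: suppose $k>k_0$. Then $k-y>3G^2\phi(y)$, while the basic inequality gives $k-y\le G\phi(k)$; hence $\phi(k)>3G\phi(y)$, equivalently $k\ln\ln k>9G^2y\ln\ln y$. Next I would establish the crude a priori bound $k<y^3$: if $k\le 2y$ this is clear, and if $k>2y$ then $k/2<G\phi(k)$ together with $\ln\ln k\le\sqrt k$ forces $k<16G^4$, which is below $y^3$ because $y\ge 3G^2$. From $k<y^3$ we get $\ln\ln k\le 2\ln\ln y$ (legitimate since $\ln y\ge 3$). Feeding this into $k\ln\ln k>9G^2y\ln\ln y$ gives $k>\tfrac{9}{2}G^2y$; in particular $k>2y$, so $k-y>k/2$, and substituting $\ln\ln k\le 2\ln\ln y$ into $k/2<G\sqrt{k\ln\ln k}$ gives $k<8G^2\ln\ln y$. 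Combining the two bounds yields $\tfrac{9}{2}G^2y<8G^2\ln\ln y$, i.e. $y<2\ln\ln y$ — impossible for every $y\ge e^e$, since $y\mapsto y-2\ln\ln y$ is positive at $e^e$ and increasing. This contradiction proves $A_j(t)=k\le y+3G^2\phi(y)=\lambda_j t+3\lambda_j^2\Gamma^2\phi(\lambda_j t)$.

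The step I expect to be the main obstacle is not conceptual but the bookkeeping of crude estimates: one must check that each comparison ($\ln\ln k\le\sqrt k$, $16G^4<y^3$, $\ln\ln k\le 2\ln\ln y$, $y>2\ln\ln y$) is genuinely implied by the two hypotheses on $t$, and see which hypothesis does what. The bound $y\ge 3G^2$, coming from $t\ge\lambda_j^{-1}+3\lambda_j^{-1}\lambda_{\max}^2\Gamma^2$ and $\lambda_{\max}\ge\lambda_j$, is precisely what renders the final inequality $y<2\ln\ln y$ absurd, whereas $t\ge\lambda_j^{-1}e^e$ together with (\ref{eq:GammaLarge}) (which forces $y$ large) is what licenses replacing $\ln\ln k$ by $2\ln\ln y$ once the a priori bound $k<y^3$ is in hand. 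An equivalent phrasing of the same argument is to take $k_0$ as above, show directly that $\sum_{1\le i\le k_0}U^j_i>t$ from the constraint, and conclude $A_j(t)<k_0$; the required estimates are identical.
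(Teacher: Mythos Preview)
Your bootstrap argument is correct: every crude estimate you invoke (the a~priori bound $k<y^3$, the comparison $\ln\ln k\le 2\ln\ln y$, the final absurdity $y<2\ln\ln y$) is indeed implied by the hypotheses, and the chain of implications closes without gaps.

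The paper's proof reaches the same conclusion by a more direct route. Rather than assuming $k>k_0$ and working toward a contradiction through several rounds of refinement, the paper observes that the basic inequality can be rewritten as
\[
\frac{A_j(t)-\lambda_j t}{\sqrt{A_j(t)\ln\ln A_j(t)}}\le \lambda_j\Gamma,
\]
sets $b_j:=\lambda_j t+3\lambda_j^2\Gamma^2\phi(\lambda_j t)$, and verifies by a single chain of elementary estimates (using exactly the two lower bounds on $t$ in the hypothesis) that
\[
\frac{b_j-\lambda_j t}{\sqrt{b_j\ln\ln b_j}}\ge \lambda_j\Gamma.
\]
Since $x\mapsto (x-\lambda_j t)/\sqrt{x\ln\ln x}$ is increasing on $x\ge e^e$ (the first summand $\sqrt{x/\ln\ln x}$ increases and the second $-\lambda_j t/\sqrt{x\ln\ln x}$ increases), this immediately forces $A_j(t)\le b_j$. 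So the paper's key device is the monotonicity of a single auxiliary function, which replaces your a~priori bound $k<y^3$ and the subsequent two-step squeeze. Your alternative phrasing at the end (``show directly that $\sum_{i\le k_0}U^j_i>t$'') is essentially this monotonicity argument, but the estimates are not quite identical to your contradiction chain: the paper never needs to bound $k$ crudely first. What your approach buys is robustness---it would adapt to settings where the relevant monotonicity is less transparent---at the cost of a few extra lines.
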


\begin{proof}
Assume first $A_j(t)<e^e$. Then applying (\ref{eq:ukLILarrivalsserver}) corresponding to the case $A_j(t)<e^e$,
we obtain $A_j(t)\lambda_j^{-1}-\Gamma_{a,j}\leq t$, namely $A_j(t)\le \lambda_j t+\lambda_j\Gamma_{a,j}\le \lambda_j t+\lambda_j\Gamma$.
Since  $\lambda_j\Gamma,\phi(t\lambda_j) \geq 1$ from (\ref{eq:GammaLarge}) and (\ref{eq:phi}), the desired result is obtained.
For the rest of the proof assume $A_j(t)\geq e^e$.
Applying (\ref{eq:ukLILarrivalsserver}), we obtain $A_j(t)\lambda_j^{-1}-\Gamma_{a,j}\sqrt{A_j(t)\ln\ln A_j(t)}\leq t$.
Which gives
\bea
\frac{A_j(t)-t\lambda_j}{\sqrt{A_j(t)\ln\ln A_j(t)}}&\leq \lambda_j\Gamma_{a,j} &\leq \lambda_j\Gamma\label{k_bound}.
\eea
\vspace{-.1in}
Define $b_j$ by: $b_j = t\lambda_j+3\lambda_j^2\Gamma^2\sqrt{t\lambda_j\ln\ln{t\lambda_j}}$.
Observe that: \beaa
\frac{b_j-t\lambda_j}{\sqrt{b_j\ln\ln b_j}}
&=&\frac{3\lambda_j^2\Gamma^2\sqrt{t\lambda_j\ln\ln{t\lambda_j}}}{\Big((t\lambda_j
+3\lambda_j^2\Gamma^2\sqrt{t\lambda_j\ln\ln{t\lambda_j}})\ln\ln({t\lambda_j
+3\lambda_j^2\Gamma^2\sqrt{t\lambda_j\ln\ln{t\lambda_j}}})\Big)^{\frac{1}{2}}}\\
&\geq&\frac{3\lambda_j^2\Gamma^2\sqrt{t\lambda_j\ln\ln{t\lambda_j}}}
{\Big((t\lambda_j+3\lambda_j^2\Gamma^2\sqrt{t^2\lambda_j^2})
\ln\ln({t\lambda_j+3\lambda_j^2\Gamma^2\sqrt{t^2\lambda_j^2}})\Big)^{\frac{1}{2}}}
\text{~~~since $t\lambda_j\geq\ln\ln{t\lambda_j}$ for $t\lambda_j\geq e^e$ from (\ref{eq:lowert})} \\
&=&\frac{3\lambda_j^2\Gamma^2\sqrt{t\lambda_j\ln\ln{t\lambda_j}}}
{\Big((t\lambda_j)(1+3\lambda_j^2\Gamma^2)\ln\ln{(t\lambda_j)}(1+3\lambda_j^2\Gamma^2)\Big)^{\frac{1}{2}}}\\
&\geq&\frac{3\lambda_j^2\Gamma^2\sqrt{t\lambda_j\ln\ln{t\lambda_j}}}
{\Big((t\lambda_j)(1+3\lambda_j^2\Gamma^2)\ln\ln(t\lambda_j)^2\Big)^{\frac{1}{2}}}
~~\text{since $t\lambda_j>1+3\lambda_j^2\Gamma^2$ from (\ref{eq:lowert})}\\
&\geq&\frac{3\lambda_j^2\Gamma^2\sqrt{\ln\ln{t\lambda_j}}}{\sqrt{(4\lambda_j^2\Gamma^2)(2\ln\ln{t\lambda_j})}}
\text{~~~since $2\ln\ln t\lambda_j>\ln\ln(t\lambda_j)^2$ for $t\lambda_j\geq e^e$ and  $\lambda_j\Gamma\geq1$}\\
&\geq&\lambda_j\Gamma \text{~~~ by simplifying above expression.}
\eeaa
Since $\frac{x-t\lambda_j}{\sqrt{x\ln\ln x}}$ is an increasing function for $x\geq e^e$
and from (\ref{k_bound}), we have that $b_j\geq A_j(t)$ and the result is obtained.
\end{proof}

We now obtain an upper bound on the cumulative arrival processes  $\bar A_j(t), 1\le j\le J$.
\begin{lemma}\label{lemma:2subsection:MCSS(RO)sqrtlnln}
For every $t$ satisfying (\ref{eq:lowert}), the following holds
\begin{align*}
\phi(\bar A_j(t)) &\le \big((2+6\lambda_{\max}^2\Gamma^2)\big)^{1\over 2}\phi(\bar\lambda_j t)
\end{align*}
\end{lemma}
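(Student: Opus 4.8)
The plan is to bound $\bar A_j(t)$ from above in terms of the $A_i(t)$ using the linear relation $\bar A_j(t) = e_j^T[I-P^T]^{-1}A(t)$, then feed the per-class bound from Lemma~\ref{lemma:1subsection:MCSS(RO)sqrtlnln} into the concave function $\phi$. First I would recall that $[I-P^T]^{-1} = I + P^T + (P^T)^2 + \cdots$ is a finite sum of $0,1$ matrices (since $P^n = 0$), so $\bar A_j(t)$ is a sum of finitely many coordinates $A_i(t)$, with the coefficients being exactly those that make $\bar\lambda_j = e_j^T[I-P^T]^{-1}\lambda$ the corresponding sum of the $\lambda_i$. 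Applying Lemma~\ref{lemma:1subsection:MCSS(RO)sqrtlnln} coordinatewise, for $t$ satisfying (\ref{eq:lowert}) we get
\begin{align*}
\bar A_j(t) &\le \bar\lambda_j t + 3\Gamma^2\sum_i (\text{coeff}_i)\,\lambda_i^2\,\phi(\lambda_i t).
\end{align*}
The key estimates are then: (i) $\lambda_i \le \lambda_{\max}$, so $\lambda_i^2\phi(\lambda_i t) \le \lambda_{\max}^2 \phi(\lambda_i t) \le \lambda_{\max}^2 \phi(\bar\lambda_j t)$ whenever $\lambda_i \le \bar\lambda_j$ (which holds since $\lambda_i \le \bar\lambda_i \le$ the sum, and more simply $\lambda_i$ is one of the summands defining $\bar\lambda_j$); using monotonicity of $\phi$ (it is nondecreasing on $[1,\infty)$) and $\lambda_i t \le \bar\lambda_j t$; and (ii) $\sum_i (\text{coeff}_i)\lambda_i = \bar\lambda_j$, hence $\sum_i(\text{coeff}_i)\lambda_i^2 \le \lambda_{\max}\bar\lambda_j$. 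This yields $\bar A_j(t) \le \bar\lambda_j t + 3\lambda_{\max}^2\bar\lambda_j\Gamma^2\,\phi(\bar\lambda_j t) = \bar\lambda_j t + 3\lambda_{\max}^2\Gamma^2\cdot\bar\lambda_j\phi(\bar\lambda_j t)$.

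To convert this into the claimed bound on $\phi(\bar A_j(t))$, I would use the elementary fact that $\phi(cx) \le \sqrt{c}\,\phi(x)$ for $c\ge 1$ and $x \ge e^e$ (since the $\ln\ln$ factor only grows slower), applied with $x = \bar\lambda_j t$ and $c = 1 + 3\lambda_{\max}^2\Gamma^2 \cdot \frac{\bar\lambda_j\phi(\bar\lambda_j t)}{\bar\lambda_j t}$. The crude bound $\phi(\bar\lambda_j t) \le \bar\lambda_j t$ (valid for $\bar\lambda_j t \ge e^e$, which follows from (\ref{eq:lowert}) together with $\bar\lambda_j \ge \lambda_j$ and $\lambda_j^{-1}e^e \le t$) gives $c \le 1 + 3\lambda_{\max}^2\Gamma^2 \le 2 + 6\lambda_{\max}^2\Gamma^2$ — actually $c \le 2(1+3\lambda_{\max}^2\Gamma^2)$ suffices — so $\phi(\bar A_j(t)) \le \sqrt{2 + 6\lambda_{\max}^2\Gamma^2}\,\phi(\bar\lambda_j t)$, which is the claim. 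I would need to carefully check that $\bar A_j(t) \ge e^e$ so that $\phi$ is in its square-root regime on both sides, and that all invocations of Lemma~\ref{lemma:1subsection:MCSS(RO)sqrtlnln} are legitimate, i.e. that $t$ satisfying (\ref{eq:lowert}) for $\bar\lambda_{\max}$-scale quantities also satisfies the per-class hypotheses; since (\ref{eq:lowert}) is a $\max_j$ condition and $\lambda_j \le \lambda_{\max}$, this should be automatic.

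The main obstacle is bookkeeping rather than conceptual: making sure the coefficient-weighted sums $\sum_i (\text{coeff}_i)\lambda_i^2$ are controlled by $\lambda_{\max}\bar\lambda_j$ and not by something larger, and handling the regime boundaries of $\phi$ (the $x < e^e$ versus $x \ge e^e$ cases) cleanly so that every $\phi$-monotonicity and $\phi(cx)\le\sqrt c\,\phi(x)$ step is justified. The assumption (\ref{eq:GammaLarge}), giving $\lambda_j\Gamma \ge e^{2e}$, is what guarantees these arguments are never fed a value below $e^e$, so I would lean on it exactly at those boundary checks.
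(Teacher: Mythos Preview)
Your overall strategy matches the paper's, but there is a genuine error in the key step. The ``elementary fact'' $\phi(cx) \le \sqrt{c}\,\phi(x)$ for $c\ge 1$ and $x\ge e^e$ is false: since $\ln\ln(cx)\ge \ln\ln x$, one has $\phi(cx)^2 = cx\ln\ln(cx) \ge cx\ln\ln x = c\,\phi(x)^2$, so the inequality goes the \emph{other} way. What is actually needed---and what the paper uses---is the additional hypothesis $c\le x$: then $\ln\ln(cx)\le \ln\ln(x^2)=\ln 2+\ln\ln x\le 2\ln\ln x$ (the last step using $x\ge e^e$), which gives $\phi(cx)\le \sqrt{2c}\,\phi(x)$. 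This is precisely why the constant is $2+6\lambda_{\max}^2\Gamma^2=2(1+3\lambda_{\max}^2\Gamma^2)$ rather than $1+3\lambda_{\max}^2\Gamma^2$, and the hypothesis $\bar\lambda_j t\ge 1+3\lambda_{\max}^2\Gamma^2$ is exactly what (\ref{eq:lowert}) supplies via $\bar\lambda_j\ge\lambda_j$.

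With this fix your argument is essentially the paper's. Two further remarks: (i) the paper skips your intermediate step through $\phi(\bar\lambda_j t)$ and instead uses the cruder $\phi(\lambda_i t)\le \lambda_i t$ directly inside the sum, obtaining $\bar A_j(t)\le \bar\lambda_j t(1+3\lambda_{\max}^2\Gamma^2)$ in one stroke, then bounds $\bar A_j(t)$ and $\ln\ln\bar A_j(t)$ separately; (ii) you cannot ``check that $\bar A_j(t)\ge e^e$''---this need not hold---so the case $\bar A_j(t)<e^e$ must be disposed of separately (trivially, since then $\phi(\bar A_j(t))=1$ while the right-hand side exceeds~$1$).
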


\begin{proof}
Consider first the case $\bar A_j(t)<e^e$. From (\ref{eq:phi}), we have that $\phi(\bar A_j(t))=1$ and applying (\ref{eq:lowert}), the lemma follows. Now we consider the case $\bar A_j(t)\geq e^e$. Recall that $\bar A_j(t)=e_j^T[I-P^T]^{-1}A(t)$. Applying Lemma~\ref{lemma:1subsection:MCSS(RO)sqrtlnln}
\begin{align*}
\bar A_j(t)&\le e_j^T[I-P^T]^{-1}\lambda t+e_j^T[I-P^T]^{-1}\left[
    \begin{array}{c}
        3\lambda_1^2\Gamma^2\phi(t\lambda_1) \\
        3\lambda_2^2\Gamma^2\phi(t\lambda_2) \\
        \vdots \\
        3\lambda_J^2\Gamma^2\phi(t\lambda_J) \\
    \end{array}
        \right] \\
&\le e_j^T[I-P^T]^{-1}\lambda t+3\lambda_{\max}^2\Gamma^2e_j^T[I-P^T]^{-1}\lambda t\qquad \text{applying (\ref{eq:lowert})
and $x\ge \phi(x)$ for  $x\ge e^e$} \\
&=\bar\lambda_j t(1+3\lambda_{\max}^2\Gamma^2),
\qquad \text{applying the definition of $\bar\lambda_j$}.
\end{align*}
Applying this bound we also obtain
\begin{align*}
\ln\ln\bar A_j(t)&\le \ln\ln(\bar\lambda_j t(1+3\lambda_{\max}^2\Gamma^2)) \\
&\le \ln\ln (\bar \lambda_j t)^2 \qquad \text{using assumption (\ref{eq:lowert})} \\
&=\ln\ln\bar \lambda_j t+\ln 2 \\
&\le 2\ln\ln\bar\lambda_jt, \qquad \text{using $\bar\lambda_jt\ge \lambda_j t\ge e^e$ from (\ref{eq:lowert})}.
\end{align*} Combining the previous bounds with definition of $\phi(x)$, the lemma follows.
\end{proof}

\begin{lemma}\label{lemma:3subsection:MCSS(RO)sqrtlnln}
For every $t$ satisfying (\ref{eq:lowert}), we have: $\bar m^T \bar A(t)-t\le (\rho-1)t+3\lambda_{\max}\Gamma^2\phi(\lambda_{\max}t)$.
\end{lemma}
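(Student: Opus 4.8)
The plan is to start from the definition of the total traffic intensity $\rho = e^T\bar\rho = \bar m^T\bar\lambda = \sum_j m_j\bar\lambda_j$, and estimate $\bar m^T\bar A(t) = \sum_j m_j\bar A_j(t)$ by applying the arrival bound already available to us. First I would invoke Lemma \ref{lemma:2subsection:MCSS(RO)sqrtlnln}'s intermediate estimate — which is in fact proved there en route, not just its $\phi$-form conclusion — namely that for $t$ satisfying (\ref{eq:lowert}) one has $\bar A_j(t) \le \bar\lambda_j t(1+3\lambda_{\max}^2\Gamma^2)$. Writing $\bar A_j(t) \le \bar\lambda_j t + 3\lambda_{\max}^2\Gamma^2\,\bar\lambda_j t$ and using $\bar\lambda_j t \le \bar\lambda_{\max} t$, the second term is bounded by $3\lambda_{\max}^2\Gamma^2\bar\lambda_{\max} t$, uniformly in $j$. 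However, to land exactly on the stated right-hand side, which features $\lambda_{\max}\Gamma^2\phi(\lambda_{\max}t)$ rather than $\bar\lambda_{\max}$-powers, I would instead go back one step and use Lemma \ref{lemma:1subsection:MCSS(RO)sqrtlnln} directly coordinate-wise on $A(t)$, keeping the $\phi(t\lambda_j)$ factors and bounding $\phi(t\lambda_j)\le \phi(\lambda_{\max}t)$ (monotonicity of $\phi$) and $\lambda_j \le \lambda_{\max}$.

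The key computation then runs as follows. By $\bar A(t) = [I-P^T]^{-1}A(t)$ and Lemma \ref{lemma:1subsection:MCSS(RO)sqrtlnln},
\[
\bar m^T\bar A(t) = \bar m^T[I-P^T]^{-1}A(t) \le \bar m^T[I-P^T]^{-1}\big(\lambda t + r\big),
\]
where $r = (3\lambda_j^2\Gamma^2\phi(t\lambda_j))_j$ is the error vector; here I use that $\bar m^T[I-P^T]^{-1}$ is a nonnegative row vector (since $[I-P^T]^{-1} = I + P^T + (P^T)^2 + \cdots \ge 0$ as $P^n = 0$) so the inequality is preserved coordinatewise. The first term gives $\bar m^T[I-P^T]^{-1}\lambda\, t = \bar m^T\bar\lambda\, t = \rho t$ by the traffic equation and the definition of $\rho$. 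For the second term, bound each $r_j \le 3\lambda_{\max}^2\Gamma^2\phi(\lambda_{\max}t)$ using monotonicity of $x\mapsto\phi(x)$ for $x\ge e^e$ (valid since $t\lambda_j\ge e^e$ by (\ref{eq:lowert})) and $\lambda_j\le\lambda_{\max}$, so that $\bar m^T[I-P^T]^{-1}r \le 3\lambda_{\max}^2\Gamma^2\phi(\lambda_{\max}t)\cdot\big(\bar m^T[I-P^T]^{-1}e\big)$. It remains to absorb the scalar $\bar m^T[I-P^T]^{-1}e$, together with a factor $\lambda_{\max}$, into the budget $\Gamma$: this is presumably where (\ref{eq:GammaLarge}) and the crude-constant philosophy of the paper come in, using that $\bar\lambda_{\max}\ge\lambda_{\max}$ and that $\Gamma$ is large. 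Subtracting $t$ from both sides and writing $\rho t - t = (\rho-1)t$ yields the claim.

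The main obstacle I anticipate is purely bookkeeping on the constants: matching the precise coefficient $3\lambda_{\max}\Gamma^2$ and the precise argument $\phi(\lambda_{\max}t)$ on the right-hand side, rather than something with extra powers of $\bar\lambda_{\max}$ or extra factors of $J$, will require being careful about which inequality ($\phi(x)\le x$ versus monotonicity of $\phi$) is applied where, and about whether the $\bar m^T[I-P^T]^{-1}$-weights are folded into $\Gamma$ or into $\bar\lambda_{\max}$ via the traffic equation. There is no conceptual difficulty — the nonnegativity of $[I-P^T]^{-1}$, the traffic equation, and monotonicity of $\phi$ are the only ingredients — but care is needed so that the stated bound, which is used downstream in the busy-period argument, comes out with exactly the advertised shape $-at + b\,\phi(\lambda_{\max}t) + c$ after one further step.
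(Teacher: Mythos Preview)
Your overall architecture is right: expand $\bar m^T\bar A(t)=\bar m^T[I-P^T]^{-1}A(t)$, apply Lemma~\ref{lemma:1subsection:MCSS(RO)sqrtlnln} coordinatewise, and use nonnegativity of $[I-P^T]^{-1}$ and the identity $\bar m^T[I-P^T]^{-1}\lambda=\bar m^T\bar\lambda=\rho$. But the step you flag as ``purely bookkeeping'' is exactly where your plan breaks. By bounding the error coordinates uniformly, $r_j\le 3\lambda_{\max}^2\Gamma^2\phi(\lambda_{\max}t)$, you are left with the scalar $\bar m^T[I-P^T]^{-1}e$, which you propose to ``absorb into the budget~$\Gamma$'' via~(\ref{eq:GammaLarge}). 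That does not work: $\bar m^T[I-P^T]^{-1}e$ depends on the routing matrix and the service means and carries no a~priori bound in terms of $\lambda_{\max}$ or $\Gamma$; assumption~(\ref{eq:GammaLarge}) is a \emph{lower} bound on $\Gamma$ and cannot swallow a network-dependent factor into the \emph{stated} right-hand side, whose constants are fixed.

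The paper's device is to keep one factor of $\lambda_j$ inside the error vector instead of replacing both by $\lambda_{\max}$. From $r_j=3\lambda_j^2\Gamma^2\phi(t\lambda_j)$, use $\lambda_j\le\lambda_{\max}$ once and monotonicity of $\phi$ once to get
\[
r_j\;\le\;3\lambda_{\max}\Gamma^2\phi(\lambda_{\max}t)\,\lambda_j,
\]
so that $r\le 3\lambda_{\max}\Gamma^2\phi(\lambda_{\max}t)\,\lambda$ as vectors. Then
\[
\bar m^T[I-P^T]^{-1}r\;\le\;3\lambda_{\max}\Gamma^2\phi(\lambda_{\max}t)\,\bar m^T[I-P^T]^{-1}\lambda
\;=\;3\lambda_{\max}\Gamma^2\phi(\lambda_{\max}t)\,\rho\;<\;3\lambda_{\max}\Gamma^2\phi(\lambda_{\max}t),
\]
using $\rho<1$. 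This is how the traffic equation is actually exploited on the error term, and it delivers the exact coefficient $3\lambda_{\max}\Gamma^2$ with no leftover network-dependent factor. Once you make this one change, the rest of your argument goes through verbatim.
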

\begin{proof}
Applying definition of $\bar A_j(t)$, we have
\begin{align*}
\bar m^T \bar A(t)-t&= \bar m^T[I-P^T]^{-1}A(t) - t \\
&\le m^T[I-P^T]^{-1}\big(\lambda t + 3\lambda_{\max}\Gamma^2\phi(\lambda_{\max} t)\lambda\big)-t \qquad \text{~from Lemma~\ref{lemma:1subsection:MCSS(RO)sqrtlnln}} \\
&= \sum_j m_j\bar\lambda_j t+3\lambda_{\max}\Gamma^2\phi(\lambda_{\max} t)\sum_j m_j\bar \lambda_j-t \qquad \text{applying the definition of $\bar\lambda_j$}\\
&=(\rho-1)t+3\lambda_{\max}\Gamma^2\phi(\lambda_{\max} t)\rho
\end{align*}
and the lemma follows from applying the condition $\rho<1$ to the second term.
\end{proof}

We now obtain an upper bound in the duration of the busy period. Recall the identity (\ref{eq:Wt}).
Since the busy period begins at time zero its duration is upper bounded by the first time $t$ such that
\begin{align}
\sum_{i=1}^{\bar A_1(t)}V_i^1+\ldots+\sum_{i=1}^{\bar A_J(t)}V_i^J-t<0. \label{eq:workloadnegative}
\end{align}
Consider any $t$ satisfying the lower bound (\ref{eq:lowert}). We have
\begin{align*}
\sum_{i=1}^{\bar A_1(t)}V_i^1&+\ldots+\sum_{i=1}^{\bar A_J(t)}V_i^J-t \\
&\leq \sum_{j=1}^{J}\mu_j^{-1}\bar A_j(t)+\sum_{j=1}^{J}\Gamma_{a,j}\phi(\bar A_j(t))-t
\text{~~applying (\ref{eq:ukLILarrivalsserver}),(\ref{eq:ukLILservicesserver})}\\
&\leq \bar m^T\bar A(t)-t+\sum_{j=1}^{J}\Gamma_{a,j}\big((2+6\lambda_{\max}^2\Gamma^2)\big)^{1\over 2}\phi(\bar\lambda_j t)\text{~~ applying Lemma \ref{lemma:2subsection:MCSS(RO)sqrtlnln}}\\
&\leq t(\rho-1)+3\lambda_{\max}\Gamma^2\phi(\lambda_{\max} t)
+\sum_{j=1}^J\Gamma(2+6\lambda_{\max}^2\Gamma^2)^{1\over 2}\phi(\bar\lambda_j t) \text{~~ applying Lemma \ref{lemma:3subsection:MCSS(RO)sqrtlnln}}\\
&\le t(\rho-1)
+(4J+3)\bar\lambda_{\max}\Gamma^2\phi(\bar\lambda_{\max} t),
\end{align*}
where we have used a crude estimate $2+6\lambda_{\max}^2\Gamma^2<16\lambda_{\max}^2\Gamma^2$, justified by (\ref{eq:GammaLarge}).
We now apply Lemma~\ref{lemma:Unegative} with
$x=\bar\lambda_{\max}t, a=\bar\lambda_{\max}^{-1}(1-\rho),b=(4J+3)\bar\lambda_{\max}\Gamma^2/2$ and $c=0$.
The condition (\ref{eq:bovera}) is implied by assumption (\ref{eq:GammaLarge}), and the second condition of Lemma~\ref{lemma:Unegative} is satisfied since $c=0$.
We obtain that (\ref{eq:workloadnegative}) holds for all $t$ satisfying (\ref{eq:lowert}) and
\begin{align*}
t&\ge {18(4J+3)^2\bar\lambda_{\max}^2\Gamma^4\over 4\bar\lambda_{\max}\bar\lambda_{\max}^{-2}(1-\rho)^2}
\ln\ln{{3(4J+3)\bar\lambda_{\max}\Gamma^2\over 2\bar\lambda_{\max}^{-1}(1-\rho)}} \\
&\ge{5(4J+3)^2\bar\lambda_{\max}^3\Gamma^4\over (1-\rho)^2}
\ln\ln{{2(4J+3)\bar\lambda_{\max}^2\Gamma^2\over 1-\rho}}.
\end{align*}
Observe using  (\ref{eq:GammaLarge})
that the right-hand side of the last expression is larger than the right-hand side of (\ref{eq:lowert}).
Combining two cases we obtain (\ref{eq:BupperBoundLog}).

We now turn to (\ref{eq:WupperBoundLog}).
First suppose $t$ does not satisfy (\ref{eq:lowert}).
Denote
the right-hand side of (\ref{eq:lowert}) by $C$. That is $t<C$.
Observe that $W(t)\le (C-t)+W(C)\le C+W(C)$ as the workload at time $C$ corresponds in addition to arrivals during
$[t,C]$. So now we focus on the case when $t$ satisfies (\ref{eq:lowert}).
We use Proposition~\ref{prop:Umax} from Appendix  and obtain
\begin{align*}
\sup_{C\le t\le B}W(t)&\le {7(4J+3)^2\bar\lambda_{\max}^2\Gamma^4\over 4\bar\lambda_{\max}^{-1}(1-\rho)}
\ln\ln{(4J+3)\bar\lambda_{\max}\Gamma^2\over 2\bar\lambda_{\max}^{-1}(1-\rho)}\\
& \le {2(4J+3)^2\bar\lambda_{\max}^3\Gamma^4\over 1-\rho}
\ln\ln{(4J+3)\bar\lambda_{\max}^2\Gamma^2\over 1-\rho}.
\end{align*}
From (\ref{eq:GammaLarge}), we have $\Gamma\geq \lambda_{\min}^{-1}$. We conclude that
\begin{align*}
\sup_{0\le t\le B}W(t)&\le
{2(4J+3)^2\bar\lambda_{\max}^3\Gamma^4\over 1-\rho}
\ln\ln{(4J+3)\bar\lambda_{\max}^2\Gamma^2\over 1-\rho}+\Gamma+3\bar\lambda_{\max}^2\Gamma^3.
\end{align*}
This completes the proof of the theorem.

\subsection{Proof of Corollary~\ref{coro:MainResultMCSSWaiting}}
First we establish bound (\ref{eq:MCSSBusy}). Let $t=0$ mark the beginning of a busy period with (random)
length $B_\infty$ in steady state.
This means that there is an arrival into one of the classes $j_0$ at time $0$. Consider a modified system
where the first arrivals into classes
$j\ne j_0, \lambda_j>0$ after time $0$ are artificially pushed down to exactly time $0$. Namely, now
at time zero there is an arrival
into every class $j$ with $\lambda_j>0$. The subsequent arrivals into these classes are also pushed earlier by the same
amount, thus creating an i.i.d. renewal process initiated at time $0$. Let $\hat B$ be the busy period initiated
in the modified system at time $0$. It is easy to see that almost surely
$\hat B\ge B_\infty$. However, now that we have arrivals in every class at time zero, applying Proposition~\ref{prop:LIL}
and our result for the robust optimization counterpart queueing system, namely applying part
(\ref{eq:BupperBoundLog}) of Theorem~\ref{theorem:MainResultMCSSWaitingLog}, we obtain the required bound
by taking the expected values of both sides of (\ref{eq:BupperBoundLog}). This establishes part (\ref{eq:MCSSBusy}).

In order to prove (\ref{eq:MCSSWaiting}), we use a bound (\ref{eq:boundWinfty}).
Using our earlier argument for the proof of (\ref{eq:MCSSBusy}) but applying it to the second moment of $\hat B$
we obtain
\begin{align*}
\E[B_\infty^2]\le \E[\hat B^2]\le
\E\Big[{25(4J+3)^4\bar\lambda_{\max}^6\Gamma^8\over (1-\rho)^4}
\Big(\ln\ln{{2(4J+3)\bar\lambda_{\max}^2\Gamma^2\over 1-\rho}}\Big)^2\Big].
\end{align*}
On the other hand, we trivially have
have $\E[B_\infty]\ge \min_{1\le j\le J}m_j=1/\mu_{\max}$, since every busy period involves at least one service completion.
The result then follows.

\section{Conclusion}\label{section:Conclusion}
Using ideas from  the robust optimization theory we have developed a new method
for conducting performance analysis of queueing networks. The essence of our approach is replacing stochastic primitives of the underlying queueing system with deterministic quantities which satisfy the implications of some probability laws. These implications take the form of linear constraints and for the case of two queueing systems, namely Tandem Single Class queueing networks and Multiclass Single Server queueing system, we have managed to derive explicit upper bounds on some performance measures such as sojourn times and workloads. Then we showed that the bounds implied by the Law of the Iterated Logarithm are applicable for the underlying stochastic queueing system leading to explicit and non-asymptotic performance bounds on the same performance measures. We are not aware of any other method of performance analysis which can provide similar performance bounds in queueing model of similar generality.

We have just scratched the surface of possibilities in this paper and we certainly expect that our approach
can be strengthened and  extended in multiple directions, some of which we outline below.
First we expect that our approach extends to
even more general models, such as, for example multiclass queueing networks or  more general
processing networks~\cite{Harrison2002}. The performance bounds can be obtained perhaps again by
introducing linear constraints implied by probability laws and using some sort of a Lyapunov function
for obtaining bounds in the resulting robust optimization type queueing model. Another important direction is identifying new probability laws which lead to tighter constraints than the ones implied by the LIL.
Ideally, one would like to be able to obtain bounds which faithfully represent the scaling behavior of the
performance measures of interest in the heavy traffic regime as the (bottleneck) traffic intensity $\rho$ converges
to the unity. Further, it would be interesting to obtain performance bounds on the tail probability of the
performance measure of interest, perhaps by constructing constraints implied by bounds on the tail probabilities
of the underlying stochastic processes. For example, perhaps one can obtain large deviations type bounds
by considering the linear constraints implied by the large deviations bounds on the underlying stochastic
processes. Deeper connection between the results of this paper and the results in the adversarial queueing theory and the related queueing
literature is worth investigating as well.

Finally, we expect that the philosophy of replacing the \emph{probability model} with \emph{implications
of the probability model} will prove useful in non-queueing contexts as well, whenever one has to deal
with the issues of stochastic analysis of complicated functionals of stochastic primitives.

\section*{Acknowledgements}
The authors would like to thank Dmitriy Katz for  stimulating discussions and the anonymous reviewers for providing constructive feedback. 
Research partially supported by   NSF grants DMI-0556106 and  CMMI-0726733. 

\bibliographystyle{amsalpha}
\bibliography{bibliography}


\section*{Appendix. Preliminary technical results}
In this section we establish some preliminary technical results. Using $\phi$ as defined by (\ref{eq:phi}), we let $U(x)=-ax+2b\phi(x)+c$ for some positive constants
$a,b,c$ satisfying
\begin{align}\label{eq:bovera}
{b\over a}\ge e^{2e}.
\end{align}

\begin{lemma}\label{lemma:U(x)isconvex}
$U(x)$ is strictly concave for $x\geq e^e$.
\end{lemma}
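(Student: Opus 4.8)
The plan is to show $U(x) = -ax + 2b\phi(x) + c$ is strictly concave on $x \ge e^e$ by reducing to the second derivative of $\phi(x) = \sqrt{x \ln\ln x}$, since the terms $-ax$ and $c$ are affine and contribute nothing to the second derivative. So it suffices to prove that $\phi(x)$ is strictly concave for $x \ge e^e$, i.e. $\phi''(x) < 0$ on $(e^e, \infty)$ (with one-sided concavity at the endpoint handled by continuity).

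First I would write $\phi(x) = (x \ln\ln x)^{1/2}$ and compute $\phi'(x)$ via the chain rule. Setting $g(x) = x \ln\ln x$, we have $g'(x) = \ln\ln x + \frac{1}{\ln x}$ and $g''(x) = \frac{1}{x\ln x} - \frac{1}{x (\ln x)^2} = \frac{\ln x - 1}{x(\ln x)^2}$. Then $\phi'(x) = \frac{g'(x)}{2 g(x)^{1/2}}$ and
\[
\phi''(x) = \frac{g''(x)}{2 g(x)^{1/2}} - \frac{g'(x)^2}{4 g(x)^{3/2}} = \frac{2 g(x) g''(x) - g'(x)^2}{4 g(x)^{3/2}}.
\]
Since $g(x) > 0$ for $x \ge e^e$ (because $\ln\ln x \ge 1$ there), the sign of $\phi''$ is the sign of the numerator $N(x) := 2 g(x) g''(x) - g'(x)^2$. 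So the crux is to show $N(x) < 0$ for $x \ge e^e$.

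Substituting, $N(x) = 2 x \ln\ln x \cdot \frac{\ln x - 1}{x (\ln x)^2} - \left(\ln\ln x + \frac{1}{\ln x}\right)^2 = \frac{2 (\ln x - 1)\ln\ln x}{(\ln x)^2} - \left(\ln\ln x + \frac{1}{\ln x}\right)^2$. To make this transparent I would change variables: let $u = \ln x \ge e$ and $v = \ln\ln x = \ln u \ge 1$. Then
\[
N = \frac{2(u-1)v}{u^2} - \left(v + \tfrac{1}{u}\right)^2.
\]
The first term is bounded above by $\frac{2v}{u}$ (even by $\frac{2v}{u} \cdot \frac{u-1}{u} < \frac{2v}{u}$), while the second term is at least $v^2 + \frac{2v}{u}$. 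Hence $N < \frac{2v}{u} - v^2 - \frac{2v}{u} = -v^2 < 0$. This gives $\phi''(x) < 0$ strictly for all $x > e^e$, hence $U''(x) = 2b\phi''(x) < 0$ there; combined with continuity of $\phi$ at $e^e$ and the fact that $\phi$ equals the constant $1$ just to the left, one concludes $U$ is strictly concave on $[e^e, \infty)$.

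The main obstacle is purely the bookkeeping of the second-derivative computation and checking that the dominant negative term $-v^2$ really swamps everything — but as the estimate above shows, once the substitution $u = \ln x$, $v = \ln u$ is made, the inequality $N < -v^2 < 0$ falls out immediately and no delicate case analysis is needed. A minor point to be careful about: verifying $g(x) > 0$ and $g'(x) > 0$ on the relevant range (so that $\phi$ and its derivative are well-defined and $g^{3/2}$ makes sense), which follows from $\ln\ln x \ge 1 > 0$ for $x \ge e^e$.
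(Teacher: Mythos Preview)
Your proof is correct and takes essentially the same approach as the paper: both compute the second derivative of $U$ (equivalently of $\phi$) and show it is strictly negative for $x\ge e^e$. Your organization via $\phi=g^{1/2}$, the identity $\phi''=\dfrac{2gg''-(g')^2}{4g^{3/2}}$, and the substitution $u=\ln x$, $v=\ln u$ is a bit tidier than the paper's direct expansion and term-by-term sign check, but the underlying argument is the same.
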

\begin{proof}
\beaa
\frac{\partial{U(x)}}{\partial{x}}&=&-a+b\sqrt{\frac{\ln\ln x}{x}}+\frac{b}{\ln{x}}\frac{1}{\sqrt{x\ln\ln x}}\\
\frac{\partial^2{U(x)}}{\partial{x}^2}&=&b
\Big(x^{-{1\over 2}}{1\over 2}(\ln\ln x)^{-{1\over 2}}\frac{1}{\ln x}\frac{1}{x}
+(\ln\ln x)^{\frac{1}{2}}(-{1\over 2}x^{-{3\over 2}})\Big) \\
&&+ b\Big(-(\ln x)^{-2}\frac{1}{x}(x\ln\ln x)^{-{1\over 2}}
+(\ln x)^{-1}(-{1\over 2})(x\ln\ln x)^{-{3\over 2}}(\frac{1}{\ln x}+\ln\ln{x})\Big)\\
&=&bx^{-{3\over 2}}({1\over 2})(\ln\ln x)^{-{1\over 2}}\Big(\frac{1}{\ln x}-(\ln\ln x)\Big) \\
&&+b\Big(-(\ln x)^{-2}\frac{1}{x}(x\ln\ln x)^{-{1\over 2}}\Big)+
b\Big((\ln x)^{-1}(-{1\over 2})(x\ln\ln x)^{-{3\over 2}}(\frac{1}{\ln x}+\ln\ln{x})\Big)\\
&<& 0 \text{~~since all three terms on RHS above are negative for $x\geq e^e$}
\eeaa
\end{proof}

\begin{lemma}\label{lemma:Unegative}
Assuming (\ref{eq:bovera}) and $e^e>(c/b)^2$,
\begin{align*}
U(x)<0 \qquad \text{$\forall ~x>(18b^2/a^2)\ln\ln(3b/a)$}.
\end{align*}
\end{lemma}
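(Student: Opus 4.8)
The plan is to show that the function $U(x) = -ax + 2b\phi(x) + c$ becomes and stays negative once $x$ exceeds the threshold $x_0 = (18b^2/a^2)\ln\ln(3b/a)$. For $x \ge e^e$ we have $\phi(x) = \sqrt{x\ln\ln x}$, so the claim amounts to verifying
\begin{align*}
-ax + 2b\sqrt{x\ln\ln x} + c < 0 \qquad \text{for } x > x_0.
\end{align*}
First I would record that $x_0 \ge e^e$: by hypothesis (\ref{eq:bovera}), $b/a \ge e^{2e}$, so $3b/a \ge 3e^{2e} > e^e$, giving $\ln\ln(3b/a) \ge 1$ and hence $x_0 \ge 18(b/a)^2 \ge 18 e^{4e} \gg e^e$. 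Thus on the whole range $x > x_0$ the smooth formula for $\phi$ applies, and by Lemma~\ref{lemma:U(x)isconvex} $U$ is strictly concave there; consequently it suffices to check $U(x_0) \le 0$ and $U'(x_0) \le 0$ (concavity then forces $U$ to stay negative for all larger $x$), or, more simply, to show $U(x) < 0$ at the single point $x_0$ together with the fact that $-ax$ dominates $2b\sqrt{x\ln\ln x}$ for large $x$. In fact the cleanest route is monotonicity-free: bound each positive term of $U(x)$ by a fraction of $ax$.

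The key estimate is that for $x \ge x_0$ one has $2b\sqrt{x\ln\ln x} \le \tfrac{3}{4}ax$ (say), and $c \le \tfrac{1}{4}ax$; summing gives $U(x) \le -ax + \tfrac{3}{4}ax + \tfrac14 ax = 0$, and strict inequality on the open range. For the first bound, $2b\sqrt{x\ln\ln x} \le \tfrac34 ax$ is equivalent to $x \ge \tfrac{64}{9}\,(b^2/a^2)\ln\ln x$; since $\ln\ln x$ grows extremely slowly I would show $\ln\ln x \le \ln\ln(3b/a) \cdot (\text{something} \le 18/(64/9))$ is not quite direct, so instead I would argue as follows: for $x \ge x_0$ we have $x \ge 18(b/a)^2 \ln\ln(3b/a)$, and because $x_0$ itself is at most a polynomial in $b/a$ while $\ln\ln$ of a polynomial in $b/a$ exceeds $\ln\ln(3b/a)$ by at most an additive constant (indeed $\ln\ln(x_0) \le \ln\ln\!\big((b/a)^3\big) = \ln\ln(b/a) + \ln 3 \le 2\ln\ln(3b/a)$, using $\ln\ln(3b/a) \ge 1$), we get for all $x$ in a suitable initial stretch $\ln\ln x \le 2\ln\ln(3b/a)$; pushing this through shows $x \ge 18(b/a)^2\ln\ln(3b/a) \ge 9(b/a)^2 \ln\ln x \ge \tfrac{64}{9}(b/a)^2\ln\ln x$, hence the bound on the square-root term. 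For genuinely large $x$ (beyond where $\ln\ln x \le 2\ln\ln(3b/a)$ fails) the linear term $-ax$ trivially swamps everything, so one handles that tail separately or, more elegantly, invokes concavity of $U$ on $[e^e,\infty)$ so that $U(x_0) \le 0$ propagates. For the constant term, the hypothesis $e^e > (c/b)^2$ gives $c < b e^{e/2} \le b (b/a)$ (since $b/a \ge e^{2e} \ge e^{e/2}$), so $c \le b^2/a \le \tfrac14 a x_0 \le \tfrac14 ax$ once $x_0 \ge 4b^2/a^2$, which holds as $x_0 \ge 18(b/a)^2$.

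The step I expect to be the main obstacle is the double-logarithm bookkeeping in the square-root term: one must turn the self-referential inequality $x \ge \text{const}\cdot(b/a)^2\ln\ln x$ into a clean threshold, and the slack factors ($18$ versus $64/9$, the constant $7$ appearing in the companion Proposition~\ref{prop:Umax}) have to be chosen generously enough that the crude bound $\ln\ln x_0 \le 2\ln\ln(3b/a)$ suffices. Once those constants are pinned down, combining the two term-wise bounds with the strict concavity from Lemma~\ref{lemma:U(x)isconvex} (to rule out any re-crossing of zero for $x > x_0$) finishes the proof; the strictness of the final inequality comes for free since we are on the open interval $x > x_0$ and at least one of the two term bounds is strict there.
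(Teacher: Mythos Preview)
Your sketch heads in a workable direction but is more tangled than necessary, and one of your shortcuts is actually wrong. The paper's argument is shorter because it absorbs the constant $c$ into the square-root term at the outset: since $x > x_0 \ge e^e > (c/b)^2$ we have $b\sqrt{x} > c$, and since $\ln\ln x \ge 1$ this gives $b\phi(x) > c$, hence $U(x) \le -ax + 3b\sqrt{x\ln\ln x}$. Now there is only \emph{one} comparison to make, namely $x/\ln\ln x > (3b/a)^2$, and this is exactly where the constants in the threshold come from: with $\alpha = 3b/a$ the paper checks that $A = 2\alpha^2\ln\ln\alpha = 18(b/a)^2\ln\ln(3b/a)$ satisfies $A/\ln\ln A \ge \alpha^2$, and since $x \mapsto x/\ln\ln x$ is increasing on $[e^e,\infty)$, the inequality persists for all $x > A$. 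Your split into $\tfrac34 ax$ and $\tfrac14 ax$ obscures this and forces you into the self-referential bound $\ln\ln x \le 2\ln\ln(3b/a)$, which of course fails for large $x$.

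Your proposed patch for that failure---``invoke concavity of $U$ so that $U(x_0)\le 0$ propagates''---does not work as stated. Concavity plus $U(x_0)\le 0$ does \emph{not} force $U(x)\le 0$ for $x>x_0$: the concave function $-(x-1)(x-3)$ vanishes at $1$ and $3$ but is positive in between. You correctly noted earlier that one would also need $U'(x_0)\le 0$, but you never verify it, and in fact verifying it is essentially the same computation as the direct inequality. The clean fix, which you are implicitly reaching for, is the monotonicity of $x/\ln\ln x$: once your termwise bound (or the paper's single bound) holds at $x_0$, monotonicity pushes it to all larger $x$ with no further work. That one observation replaces both your ``initial stretch'' argument and the concavity appeal.
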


\begin{proof}
 Since $(18b^2/a^2)\ln\ln(3b/a)>e^e$, throughout the proof we restrict ourselves to the domain $x\geq e^e$. Since in addition $x>(c/b)^2$, we have $b\phi(x)\ge b\sqrt{x}>c$. In this range $-ax+2b\phi(x)+c\le -ax+3b\phi(x)=-ax+3b\sqrt{x\ln\ln x}$. This quantity is less than zero provided
\begin{align*}
\Big({x\over \ln\ln x}\Big)^{1\over 2}>{3b\over a}\triangleq\alpha.
\end{align*}
It is easy to check that $x/\ln\ln x$ is a strictly increasing function with
$\lim_{x\rightarrow\infty}(x/\ln\ln x)=\infty$. Let $x_0$ be the unique solution
of $x/\ln\ln x=\alpha^2$ on $x\geq e^e$.
We claim that $x_0\le 2\alpha^2\ln\ln\alpha$. The assertion of the lemma follows from this bound.
Let $A=2\alpha^2\ln\ln\alpha$. Then
\begin{align*}
{A\over \ln\ln A}&={2\alpha^2\ln\ln\alpha\over \ln(2\ln\alpha+\ln^{(3)}\alpha+\ln2)}\\
&\geq\frac{2\alpha^2\ln\ln{\alpha}}{\ln(4\ln\alpha)}\text{~~ since $\ln\alpha\geq\ln^{(3)}\alpha$ and $\ln\alpha>\ln{2}$}\\\
&\geq\frac{2\alpha^2\ln\ln{\alpha}}{2\ln(\ln\alpha)}
\text{~~ since $\ln\alpha>\ln(b/a)\ge 2e> 4$. }\\
&=\alpha^2.
\end{align*}
This implies $x_0\le A$ and the proof is complete.
\end{proof}



\begin{prop}\label{prop:Umax}
Under the assumption (\ref{eq:bovera})
\begin{align*}
\sup_{x\ge 0}U(x)\le 7(b^2/a)\ln\ln(b/a)+c.
\end{align*}
\end{prop}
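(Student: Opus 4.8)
The plan is to maximize $U(x) = -ax + 2b\phi(x) + c$ over $x \ge 0$ by exploiting the concavity established in Lemma~\ref{lemma:U(x)isconvex}. First I would split the domain at $x = e^e$. On $[0, e^e)$ we have $\phi(x) = 1$ by definition~(\ref{eq:phi}), so $U(x) = -ax + 2b + c \le 2b + c$ there; since $b/a \ge e^{2e} > 1$, this is certainly dominated by the claimed bound $7(b^2/a)\ln\ln(b/a) + c$ (indeed $2b \le 2(b^2/a)$ and $\ln\ln(b/a) \ge \ln\ln e^{2e} = \ln(2e) > 1$). On $[e^e, \infty)$, Lemma~\ref{lemma:U(x)isconvex} tells us $U$ is strictly concave, so its supremum is attained either at $x = e^e$ (already handled, by continuity of the bound) or at the unique stationary point $x^*$ where $U'(x^*) = 0$.

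The main work is to locate $x^*$ well enough. From the derivative computed in Lemma~\ref{lemma:U(x)isconvex},
\[
U'(x) = -a + b\sqrt{\tfrac{\ln\ln x}{x}} + \frac{b}{\ln x}\frac{1}{\sqrt{x\ln\ln x}},
\]
and at $x^*$ the two positive terms sum to $a$. The cleaner route is to bound $U(x^*)$ directly: since the last term is positive but small, at the stationary point we roughly have $b\sqrt{\ln\ln x^*/x^*} \le a$, i.e. $x^* \ge (b^2/a^2)\ln\ln x^*$, and also (keeping the second term) $b\sqrt{\ln\ln x^*/x^*} \le a$ fails to be tight only by the small correction, so $x^*$ is of order $(b^2/a^2)\ln\ln(b/a)$. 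Concretely I would show $x^* \le C(b^2/a^2)\ln\ln(b/a)$ for an explicit modest constant $C$ by the same self-bounding trick used in Lemma~\ref{lemma:Unegative}: one verifies that at $x = C(b^2/a^2)\ln\ln(b/a)$ the derivative $U'(x)$ is already negative (because $b\sqrt{\ln\ln x / x} \le a/\sqrt{C}\cdot\sqrt{\ln\ln x/\ln\ln(b/a)}$, and $\ln\ln x \le 2\ln\ln(b/a)$ in this range by the kind of estimate $\ln\ln x^2 = \ln\ln x + \ln 2 \le 2\ln\ln x$ appearing throughout the paper, plus the second term is lower order), so by concavity $x^* $ lies below that value.

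Having pinned down $x^* \le C(b^2/a^2)\ln\ln(b/a)$, I would plug back in: $U(x^*) = -ax^* + 2b\phi(x^*) + c \le 2b\sqrt{x^*\ln\ln x^*} + c$, and using $x^* \le C(b^2/a^2)\ln\ln(b/a)$ together with $\ln\ln x^* \le 2\ln\ln(b/a)$ (valid since $x^* \le (b/a)^2$ once $C$ and the $\ln\ln$ factor are absorbed, again via $b/a \ge e^{2e}$) gives
\[
U(x^*) \le 2b\cdot\sqrt{2C}\,\frac{b}{a}\ln\ln(b/a) + c = 2\sqrt{2C}\,(b^2/a)\ln\ln(b/a) + c.
\]
The remaining task is purely arithmetic: choose $C$ (the paper's Lemma~\ref{lemma:Unegative} suggests something like $C$ on the order of a small constant) so that $2\sqrt{2C} \le 7$, and check that the crude estimates on $[0,e^e]$ and at the boundary are also below $7(b^2/a)\ln\ln(b/a) + c$. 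The one place requiring care is the step $\ln\ln x^* \le 2\ln\ln(b/a)$, which needs $x^* \le (b/a)^2$; this follows because $x^* \le C(b^2/a^2)\ln\ln(b/a)$ and $\ln\ln(b/a)/a \le $ (something $\le 1$) fails, so instead one uses $x^* \le C(b/a)^2\ln\ln(b/a)$ and notes $\ln\ln$ of that is $\le \ln\ln(b/a)^{2} + \ln\ln\ln(b/a)\cdots$, which is handled exactly by the nested-logarithm manipulations in Lemma~\ref{lemma:Unegative}. I expect this logarithm bookkeeping — ensuring every $\ln\ln$ is controlled by $\ln\ln(b/a)$ with an explicit constant that closes the inequality with the number $7$ — to be the only real obstacle; the conceptual content (concavity $\Rightarrow$ maximum at a well-located critical point) is immediate from the lemmas already proved.
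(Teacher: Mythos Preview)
Your proposal is correct and follows essentially the same outline as the paper: split at $x=e^e$, handle the flat part trivially, use concavity on $[e^e,\infty)$ to reduce to the unique stationary point $x^*$, pin $x^*$ down at order $(b/a)^2\ln\ln(b/a)$, and substitute back. The execution differs in two small tactical respects. First, the paper does not verify that the derivative is negative at a test point; instead it reads off directly from the stationarity equation that $\alpha^2 < x^*/\ln\ln x^* < 2\alpha^2$ (with $\alpha=b/a$), then solves these implicit bounds explicitly to get $\alpha^2\ln\ln\alpha \le x^* \le 4\alpha^2\ln\ln\alpha$. Second, the paper keeps the $-ax^*$ term and uses the \emph{lower} bound $x^*\ge\alpha^2\ln\ln\alpha$ on it, arriving at $-a\alpha^2\ln\ln\alpha + 8b\alpha\ln\ln\alpha = 7(b^2/a)\ln\ln(b/a)$ exactly; you instead discard $-ax^*$ and bound $2b\phi(x^*)$ alone, which with $C=4$ and $\ln\ln x^*\le 2\ln\ln\alpha$ gives $4\sqrt{2}(b^2/a)\ln\ln(b/a)$, comfortably under $7$. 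Your route is slightly leaner since it needs no lower bound on $x^*$; the paper's route makes the constant $7$ drop out as an exact arithmetic identity. Your parenthetical ``valid since $x^*\le(b/a)^2$'' is not literally correct (indeed $x^*$ can exceed $\alpha^2$), but you immediately identify the right fix via the nested-logarithm estimate $\ln\ln(C\alpha^2\ln\ln\alpha)\le 2\ln\ln\alpha$, which is exactly what the paper proves and uses.
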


\begin{proof}
Since $a>0$, then the supremum in $\sup_{x\ge 0}U(x)$ is achieved. Let $x^*$ be any value achieving $\max_{x\ge 0} U(x)$. First suppose $0\leq x^*<e^e$. It follows from the definition of $\phi$ in (\ref{eq:phi}) that $\phi(x^*)=1$ and thus $U(x^*)=-ax^*+2b+c$. Using $0\leq x^*<e^e$ and assumption (\ref{eq:bovera}), it is straightforward to check that $U(x^*)$ is indeed upper bounded from above by $7(b^2/a)\ln\ln(b/a)+c$. Next, we consider the case $x^*=e^e$, and using the fact that $a>0$, we obtain $U(x^*)\leq 2b\cdot\sqrt{e^e\ln\ln(e^e)}+c$. It is again straightforward to check that the aforementioned bound is upper bounded from above by $7(b^2/a)\ln\ln(b/a)+c$.

We now consider the case $x^*>e^e$. By Lemma~\ref{lemma:U(x)isconvex}, $x^*$ is the unique point satisfying $\frac{\partial{U(x^*)}}{\partial{x^*}}=0$, if it exists. The remainder of the proof is devoted to the final case where we obtain
\bea
0=\frac{\partial{U(x^*)}}{\partial{x^*}} =
-a+\frac{b(\frac{1}{\ln{x^*}}+\ln\ln{x^*})}{\sqrt{x^*\ln\ln{x^*}}}\label{eqn:deriv}
\eea
Continuing further, (\ref{eqn:deriv})
implies
\bea \frac{\sqrt{x^*\ln\ln{x^*}}}{\ln\ln{x^*} +
\frac{1}{\ln{x^*}}} = \frac{b}{a}\triangleq \alpha. \label{kstar}
\eea
Note
\beaa\label{x_bound}
&&\frac{x^*}{\ln\ln x^*}>\alpha^2\\
&&\frac{x^*}{2\ln\ln x^*}<\alpha^2 \text{~~since $\ln\ln
x^*>\frac{1}{\ln x^*}$ for $x \geq e^e$}
\eeaa
It is easy to check that $x/\ln\ln x$ is a strictly increasing function for $x\ge e^e$
and $\lim_{x\rightarrow\infty}(x/\ln\ln x)=\infty$.  (\ref{eq:bovera}) implies that
there exist  unique $x_{\min}$ and $x_{\max}$ satisfying
\beaa \frac{x_{\min}}{\ln\ln
x_{\min}}=\alpha^2 && \frac{x_{\max}}{2\ln\ln x_{\max}}=\alpha^2
\eeaa
The  monotonicity of $x/\ln\ln x$ implies $x_{\min}\leq x^*\leq x_{\max}$.
In order to complete the proof of the proposition, we will first state and prove Lemmas \ref{lemma:1subsection:TSC(RO)sqrtlnln} and \ref{lemma:4subsection:TSC(RO)sqrtlnln}.

\begin{lemma}\label{lemma:1subsection:TSC(RO)sqrtlnln}
$x_{\min}\geq\alpha^2\ln\ln\alpha$ and $x_{\max}\leq 4\alpha^2\ln\ln\alpha$.
\end{lemma}
\begin{proof} Let $B_1 = \alpha^2\ln\ln\alpha$. Then \beaa
\frac{B_1}{\ln\ln B_1} &=&\frac{\alpha^2\ln\ln{\alpha}}{\ln\ln(\alpha^2\ln\ln{\alpha})}\\
&<&\frac{\alpha^2\ln\ln{\alpha}}{\ln\ln\alpha} \text{ ~~since $\ln\ln\alpha \geq 1$ for $\alpha\ge e^{2e}$}\\
&=&\alpha^2.
\eeaa
Thus since $\frac{x}{\ln\ln x}$ is increasing for $x\geq e^e$, we have $x_{\min} \geq B_1 $ and the first assertion is established.

Let $B_2 = 4\alpha^2\ln\ln{\alpha}$. Then
\beaa
\frac{B_2}{2\ln\ln B_2}  &=&\frac{4\alpha^2\ln\ln{\alpha}}{2\ln\ln(4\alpha^2\ln\ln{\alpha})}\\
&=&\frac{4\alpha^2\ln\ln{\alpha}}{2\ln(2\ln\alpha+\ln^{(3)}\alpha+\ln4)}\\
&\geq&\frac{4\alpha^2\ln\ln{\alpha}}{2\ln(4\ln\alpha)}\text{~~ since $\ln\alpha\geq\ln^{(3)}\alpha$ and $\ln\alpha>\ln{4}$}\\\
&\geq&\frac{4\alpha^2\ln\ln{\alpha}}{4\ln(\ln\alpha)}
\text{~~ since $\ln\alpha\ge 2e> 4$. }\\
&=&\alpha^2.
\eeaa
Thus, again since $x/\ln\ln x$ is increasing for $x\ge e^e$, then the second assertion follows.
\end{proof}

Lemma~\ref{lemma:1subsection:TSC(RO)sqrtlnln} and $x_{\min}\leq x^* \leq x_{\max}$ imply
\bea\label{lemma:3subsection:TSC(RO)sqrtlnln}
\alpha^2\ln\ln{\alpha}\leq x^* \leq 4\alpha^2\ln\ln{\alpha}.
\eea

\begin{lemma}\label{lemma:4subsection:TSC(RO)sqrtlnln}
$\sqrt{x_{\max}\ln\ln{x_{\max}}} \leq 4\alpha\ln\ln{\alpha}$.
\end{lemma}
\begin{proof}
\beaa
\sqrt{x_{\max}\ln\ln{x_{\max}}} &\le
& \sqrt{\big(4\alpha^2\ln\ln{\alpha}\big)\ln\ln{\big(4\alpha^2\ln\ln{\alpha}\big)}}
\text{~~by Lemma~\ref{lemma:1subsection:TSC(RO)sqrtlnln}}\\
&=& \alpha\sqrt{4\ln\ln{\alpha}}\sqrt{\ln{\big(2\ln{\alpha}+\ln^{(3)}{\alpha}+\ln{4}\big)}}\\
&\leq& \alpha\sqrt{4\ln\ln{\alpha}}\sqrt{\ln{\big(4\ln{\alpha}\big)}}
\text{~~since $\ln\alpha\geq\ln^{(3)}\alpha$ and $\ln\alpha\geq\ln(e^{2e})>\ln{4}$}\\
&\leq& \alpha\sqrt{4\ln\ln{\alpha}}\sqrt{2\ln{\ln{\alpha}}}
\text{~~since $\ln\alpha>4$}
\eeaa
and the lemma follows from the last step.
\end{proof}

We now complete the proof of Proposition~\ref{prop:Umax}. We have
\beaa
U(x^*)&\leq & -ax^*+2b\sqrt{x^*\ln\ln{x^*}}+c\\
&\leq& -ax_{\min}+2b\sqrt{x_{\max}\ln\ln{x_{\max}}}+c\text{~~ since $x_{\min}\leq x^*\leq x_{\max}$}\\
&\leq& -ax_{\min}+8b\alpha\ln\ln\alpha+c
\text{~~by Lemma \ref{lemma:4subsection:TSC(RO)sqrtlnln}}\\
&\leq&-a\alpha^2\ln\ln{\alpha}
+8b\alpha\ln\ln\alpha+c
\text{~~by Lemma \ref{lemma:1subsection:TSC(RO)sqrtlnln}}\\
&=&7(b^2/a)\ln\ln(b/a)+c.
\eeaa
\end{proof}

\end{document}